\newtheorem{thm}{Theorem}
\newtheorem{defn}{Definition}
\newtheorem{fact}{Fact}
\newtheorem{lem}[thm]{Lemma}
\newtheorem{cor}[thm]{Corollary}
\newtheorem{question}{Question}
\newcommand{\pp}{\mathcal{P}}
\newcommand{\up}{\upharpoonright}
\newcommand{\ra}{\rightarrow}
\newcommand{\mc}{\mathcal}
\newcommand{\ms}{\mathscr}
\newcommand{\msa}{\mathscr{A}}
\newcommand{\msb}{\mathscr{B}}
\newcommand{\mcf}{\mathcal{F}}
\newcommand{\mcx}{\mathcal{X}}
\newcommand{\lo}{<_{ho}}
\begin{document}

  \title{A ccc indestructible construction with CH}
  \author{Yinhe Peng}
  \address{Academy of Mathematics and Systems Science, Chinese Academy of Sciences\\ East Zhong Guan Cun Road No. 55\\Beijing 100190\\China}
\email{pengyinhe@amss.ac.cn}

\thanks{The author was partially supported by NSFC No. 12571001.}

\subjclass[2010]{03E57, 03E65, 03E02}
\keywords{ccc indestructible, Kurepa family, precaliber $\omega_1$,  partition relation, complete coherent Suslin tree,  MA$_{\omega_1}(S)$, MA$_{\omega_1}(S)[S]$}

  \begin{abstract}
  We introduce a variant of the Kurepa family. We then use one such  family to construct a ccc indestructible property associated with a complete coherent Suslin tree $S$. Moreover, in every ccc forcing extension that preserves Suslin of $S$, forcing with $S$ induces a strong negative partition relation.
  
  \end{abstract}
  
    \maketitle

  \section{Introduction}
  
 When dealing with a specific problem, it is common to construct, for every uncountable subset, an uncountable subset with some additional property. For example, to destroy Suslin subtrees of an Aronszajn tree $T$, we need to add an uncountable antichain to every uncountable subset of $T$; to add precaliber $\omega_1$  (property K$_n$) to a ccc poset $\mc{P}$, we need to add an uncountable centered  ($n$-linked) subset to every uncountable subset of $\pp$.
  

A poset $\pp$ has \emph{precaliber $\omega_1$} if every uncountable subset of $\pp$ has an uncountable centered subset. For $n\geq 2$, $\pp$ has \emph{property {\rm K}$_n$} if every uncountable subset of $\pp$ has an uncountable subset that is $n$-linked.   A subset $X$ of $\pp$ is \emph{$n$-linked} if every $n$-element subset of $X$ has a common lower bound. We omit $n$ if $n=2$.

There are two standard  methods to obtain sufficiently many uncountable subsets with an additional property. One method is to guarantee the property by the structure. For example, the random forcing has property K$_n$ for all $n\geq 2$. The other method is to force sufficiently many uncountable subsets via iterated forcing (see, e.g., \cite{PW24}).

However, the feature of each method may limit its usage in specific problems: the property obtained by the first method is in general absolute, i.e., the property cannot be destroyed by forcing; and the property obtained by the second method usually exists in models with large continuum.

We will introduce a variant of the Kurepa family. This variant is consistent with CH (or $\diamondsuit^+$) and has potential to construct, e.g., (non-absolute) posets with precaliber $\omega_1$ (see Definition \ref{defn non-absolute} and the paragraph below). As an example, we use one such family to construct a coloring on pairs of a complete coherent Suslin tree that has the   property achieved via iterated forcing in \cite{PW24}. Moreover, colorings on pairs of a Suslin tree have interests of their own.

Strong negative partition relations on $\omega_1$ play important roles in constructing structures of size $\omega_1$ in ZFC. In \cite{Todorcevic87}, Todorcevic introduced the minimal walk techique and proved  $\omega_1\not\ra [\omega_1]^2_{\omega_1}$. In \cite{Moore06}, Moore employed a new structure to prove a general form of strong negative partition relation which implies the existence of an L space. In \cite{PW18}, the author and Wu proved the finite dimensional version Pr$_1(\omega_1, \omega, n+1)$ for $n>1$ and its general form which implies the existence of an L space whose $n$th power is still an L space.

 For $\theta\leq \omega$, Pr$_1(\omega_1, \kappa, \theta)$ is the following statement:
\begin{itemize}
  \item (\cite{Galvin},\cite{Shelah94}) There is a function $c:[\omega_1]^2\rightarrow \kappa$ such that whenever we are given $n<\theta$, a collection $\langle a_\alpha: \alpha< \omega_1\rangle$ of pairwise disjoint elements of $[\omega_1]^{n}$ and  $\eta<\kappa$,   there are $\alpha< \beta$ such that $c( a_\alpha(i), a_\beta(j))=\eta$ for all $i, j<n$.
  \end{itemize}
  Here $a(i)$ is the $i$th element of $a$ in the increasing enumeration for $a\in [\omega_1]^n$ and $i<n$.
 It turns out that  Pr$_1(\omega_1, \omega, n)$ for all $n>1$ is optimal according to
Galvin's observation that MA$_{\omega_1}$ implies the failure of Pr$_1(\omega_1, 2, \omega)$. On the other hand,  Pr$_1(\omega_1, \omega, \omega)$ follows from $\mathfrak{b}=\omega_1$ (see \cite{Todorcevic89}).

A natural question which is implicitly asked in \cite[1.5]{Todorcevic89} is whether  Pr$_1(\omega_1, 2, \omega)$ follows from $\mathfrak{t}=\omega_1$. In other words, it is interesting to explore constructions for the structure $([\omega]^\omega, \subset^*)$  that are analogous to constructions for the structure $(\omega^\omega, <^*)$. 

A strategy towards a negative answer has been applied in other problems (see \cite{LT}) and has potential to distinguish closely related properties, e.g., MA$_{\omega_1}$ and $\mathscr{K}_2$ (see \cite{TV}). This consists in considering a forcing axiom-like statement which   implies $\mathfrak{t}=\omega_1$, e.g., MA$_{\omega_1}(S)[S]$, and proving that it implies $\neg$Pr$_1(\omega_1, 2, \omega)$. Note that for a Suslin tree $S$, $\Vdash_S \mathfrak{t}=\omega_1$.
  
  For a Suslin tree $S$, MA$_{\omega_1}(S)$ is the following assertion (see \cite{LT}):
  \begin{itemize}
  \item for any c.c.c. poset $\mathcal{P}$ with $\Vdash_\mc{P} S$ is Suslin, for any collection $\{D_\alpha: \alpha<\omega_1\}$ of dense   subsets of $\mc{P}$, there is a filter $G\subseteq \mc{P}$ meeting them all.
  \end{itemize}
MA$_{\omega_1}(S)[S]$ holds if (see \cite{Larson-Tall}) the universe is a forcing extension by $S$ over a model of MA$_{\omega_1}(S)$ where $S$ is the Suslin tree. PFA$(S)$ and PFA$(S)[S]$ are defined in an analogous way.

In \cite{PW24}, a model of $\mathrm{MA}_{\omega_1}(S)[S]$ is  constructed in which $\mathrm{Pr}_1(\omega_1, 2,\omega)$ holds. The model is constructed via iterated ccc forcing while a specific property is designed and preserved in the iteration process. This does not exclude the possibility of constructing a model of $\mathrm{MA}_{\omega_1}(S)[S]$ via iterated ccc forcing (while preserving another property) together with $\neg\mathrm{Pr}_1(\omega_1, 2,\omega)$.

We will introduce  the thin Kurepa family (Definition \ref{defn tkf}) which is ccc indestructible (Lemma \ref{lem ccc indestructible}) and consistent with GCH and $\diamondsuit^+$ (Theorem \ref{thm consistency}). Moreover, in every ccc forcing extension of a model with such families, if $\mathrm{MA}_{\omega_1}(S)[S]$ holds, then $\mathrm{Pr}_1(\omega_1, 2,\omega)$ holds (Corollary \ref{cor indes}). This suggests that, in order to get a model of $\mathrm{MA}_{\omega_1}(S)[S]+\neg\mathrm{Pr}_1(\omega_1, 2,\omega)$,  we probably need to force non-ccc posets in the iteration process.
In fact, whether the stronger property PFA$(S)[S]$ decides $\mathrm{Pr}_1(\omega_1, 2, \omega)$ remains open.
\begin{question}
Does $\mathrm{PFA}(S)[S]$ imply  $\neg\mathrm{Pr}_1(\omega_1, 2, \omega)$? Or $\ms{K}_2$?
\end{question}
$\ms{K}_2$ is the assertion that every ccc poset has property K, first considered by Knaster and Szpilrajn in Problem 192 of the Scottish Book in the 1940s (see \cite{The Scottish Book}), arises naturally when attempting to solve the famous Suslin Problem \cite{Souslin}. The notation $\ms{K}_2$ was first used in \cite{TV}. $\ms{K}_2$  is equivalent to the following property in form of colorings.
\begin{itemize}
\item For every coloring $\pi: [\omega_1]^{<\omega}\ra 2$ with $\pi^{-1}\{0\}$ downward closed, the poset $(\pi^{-1}\{0\}, \supseteq)$ is ccc iff it has property K.
\end{itemize}

This paper is organized as follows.   In Section 3, we introduce the thin Kurepa family, analyze its properties and prove its relative consistency with ZFC. In Section 4, we use a thin Kurepa family to construct a coloring on a complete coherent Suslin tree and show that the coloring witnesses Pr$_1(\omega_1, 2, \omega)$ in the Suslin extension.

  \section{preliminaries}
In this section, we introduce some standard definitions and facts (see \cite{Jech}, \cite{Kunen}).

$[X]^\kappa$ is the set of all subsets of $X$ of size $\kappa$. Moreover, if $X$ is a set of ordinals and $\alpha$ is less than the order type of $X$, then $X(\alpha)$ is the $\alpha$th element of $X$ in its increasing enumeration. In particular, if $k<\omega$ and $b\in [X]^k$, then $b(0),b(1),...,b(k-1)$ is the increasing enumeration of $b$.

A \emph{Kurepa family} is an $\mc{F}\subseteq P(\omega_1)$ such that $|\mc{F}|\geq \omega_2$ and $|\{A\cap \alpha: A\in \mc{F}\}|\leq \omega$ for all $\alpha<\omega_1$.

Suslin trees in this paper are subsets of $2^{<\omega_1}$, ordered by extension.

For a  Suslin tree $S$ and $t\in S$, the\emph{ height} of $t$ is $ht(t)=dom(t)$. In other words, $ht(t)$ is the order type of $\{s\in S: s<_S t\}$. For any $\alpha<\omega_1$, $S_\alpha=\{t\in S: ht(t)=\alpha\}$ is  the $\alpha$th level of $S$ and $S\up\alpha=\bigcup_{\beta<\alpha} S_\beta$. For $s\in S$,
\[S_{<s}=\{t: t<_S s\}.\]

 For any $s, t\in 2^{<\omega_1}$, 
$$\Delta(s,t)=\max \{\alpha\leq \min\{dom(s), dom(t)\}: s\up_\alpha = t\up_\alpha\},$$ 
$$s \wedge t=s\up_{\Delta(s,t)},$$
$$D_{s,t}=\{\xi<\min\{dom(s), dom(t)\}: s(\xi)\neq t(\xi)\}.$$

A Suslin tree $S$  is \emph{coherent} if  $D_{s,t}$ is finite for any $s, t\in S$. 

A coherent Suslin tree $S\subset 2^{<\omega_1}$   is \emph{complete} if for all $s\in S$ and all $t\in 2^{ht(s)}$, $t\in S$ whenever $|D_{s,t}|<\omega$. 

If we add a trivial restriction on a coherent Suslin tree $S$ that $\{s(\alpha): s\in S_{\alpha+1}\}=2$ for all $\alpha<\omega_1$, completeness is the requirement that 
\[\text{for $s, t$ in $S$ with }ht(s)<ht(t), ~s^\smallfrown t\up [ht(s), ht(t))\in S.\]  
In fact, adding completeness does not lose generality up to club isomorphism.
\begin{lem}
Assume $S\subset 2^{<\omega_1}$ is a coherent Suslin tree. Then there is a club $C$ such that for every $s, t$ in $S\up C=\bigcup_{\alpha\in C} S_\alpha$ with $ht(s)<ht(t)$, $s^\smallfrown t\up [ht(s), ht(t))\in S$. 
\end{lem}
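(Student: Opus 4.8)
\emph{Proof idea.} The plan is to realise $S$ inside a canonical complete coherent tree and to show that $S$ differs from it only below a fixed countable ordinal. Using coherence, fix a sequence $\langle r_\alpha:\alpha<\omega_1\rangle$ with $r_\alpha\in S_\alpha$; then $|D_{r_\alpha\up\beta,r_\beta}|<\omega$ for all $\beta<\alpha$ and $|D_{s,r_\alpha}|<\omega$ for all $s\in S_\alpha$. Put $\hat S_\alpha=\{f\in 2^\alpha:|D_{f,r_\alpha}|<\omega\}$ and $\hat S=\bigcup_{\alpha<\omega_1}\hat S_\alpha$. A routine verification shows that coherence of $\langle r_\alpha\rangle$ makes $\hat S$ a subtree of $2^{<\omega_1}$ (closed under initial segments), that $S\subseteq\hat S$ with $S_\alpha\subseteq\hat S_\alpha$ for every $\alpha$, that each $\hat S_\alpha$ is countable and closed under finite modifications, and that $\hat S$ is complete.

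Since $S$ is closed under initial segments inside $\hat S$, the set $\hat S\setminus S$ is closed under extensions in $\hat S$; let $A$ be the set of its $<_{\hat S}$-minimal elements, which is an antichain of $\hat S$. The decisive point is that \emph{$A$ is countable}. This is where the hypothesis that $S$ is Suslin (and not merely Aronszajn) enters: I would invoke the fact, standard in the theory of coherent Suslin trees, that the completion $\hat S$ carries no uncountable antichain (so in particular $A$ is countable); if one wants a self-contained argument, the point is that an uncountable antichain of $\hat S$ can, via a $\Delta$-system reduction on the finite ``traces'' $D_{f_\xi,r_{\alpha_\xi}}$ together with the coordinate-flip automorphisms $f\mapsto f\oplus e$ of $\hat S$, be pushed into $S$ itself, contradicting its Suslinity. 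Granting this, fix $\gamma_0<\omega_1$ with $ht(a)<\gamma_0$ for every $a\in A$ (take $\gamma_0=0$ if $A=\emptyset$, in which case $S$ is already complete).

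I then claim that $C:=\{\delta:\gamma_0\le\delta<\omega_1\}$, a final segment of $\omega_1$ and hence a club, works. Let $s,t\in S\up C$ with $\alpha:=ht(s)<ht(t)=:\beta$, so $\alpha\ge\gamma_0$. Write $w:=t\up\alpha\in S_\alpha$ and $F:=D_{s,w}$, a finite subset of $\alpha$. A direct computation identifies $s^\smallfrown t\up[\alpha,\beta)$ with the function $t\oplus F\in 2^\beta$ obtained from $t$ by flipping exactly the coordinates in $F$; since $\hat S_\beta$ is closed under finite modifications, $t\oplus F\in\hat S_\beta$. Suppose $t\oplus F\notin S$; then $t\oplus F\in\hat S\setminus S$, so $t\oplus F\ge_{\hat S}a$ for some $a\in A$, and $ht(a)<\gamma_0\le\alpha$. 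Hence $a=(t\oplus F)\up ht(a)$. But for every $\xi<ht(a)$ we have $\xi<\alpha$, so $w(\xi)=t(\xi)$, and $(t\oplus F)(\xi)$ equals $t(\xi)=w(\xi)$ if $\xi\notin F$ and $1-t(\xi)=1-w(\xi)$ if $\xi\in F$; since $\xi\in F=D_{s,w}$ holds precisely when $s(\xi)\ne w(\xi)$, in either case $(t\oplus F)(\xi)=s(\xi)$. Therefore $a=s\up ht(a)\in S$, contradicting $a\notin S$. Thus $s^\smallfrown t\up[\alpha,\beta)=t\oplus F\in S$, as required.

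The one substantive obstacle is the countability of $A$ — equivalently, that the completion $\hat S$ has no uncountable antichain; once this is in hand, the rest is bookkeeping with the coherent structure together with the elementary flip computation above.
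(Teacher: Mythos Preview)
Your argument is correct (granting the invoked fact that the completion $\hat S$ of a coherent Suslin tree is again Suslin), but it proceeds quite differently from the paper. The paper never forms $\hat S$: it fixes a continuous chain of countable elementary submodels $M_\alpha\prec H_{\omega_2}$ containing $S$, sets $C=\{M_\alpha\cap\omega_1:\alpha<\omega_1\}$, and argues by contradiction. If $s,t\in S\up C$ with $ht(s)=M_\xi\cap\omega_1$ were a counterexample, then $D_{s,t}\subseteq\overline\alpha$ for some $\overline\alpha\in M_\xi$, and elementarity reflects the statement ``there exist $s'\ge_S s\up\overline\alpha$, $t'\ge_S t\up\overline\alpha$ with $D_{s',t'}\subseteq\overline\alpha$ and $s'^\smallfrown t'\up[ht(s'),ht(t'))\notin S$'' to arbitrarily high levels; a separated refinement of the resulting $t_\beta$'s is then shown to be an uncountable antichain \emph{in $S$}, contradicting Suslinness directly. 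Your route instead externalises the obstruction into $\hat S$, reduces everything to the countability of the antichain $A$ of minimal missing nodes, and obtains the stronger conclusion that $C$ can be taken to be a final segment $[\gamma_0,\omega_1)$. The trade-off is that the real work is pushed into the lemma ``$\hat S$ is Suslin'': your one-line sketch ($\Delta$-system on the traces $D_{f_\xi,r_{\alpha_\xi}}$ plus flips) points the right way but is incomplete as stated---after the $\Delta$-system one still needs a pressing-down/separation step to bound the traces below some $\gamma$ and stabilise $f_\xi\up\gamma$, so that the chosen $s_\xi\in S_{\alpha_\xi}$ agree with $f_\xi$ above $\gamma$ and inherit incomparability. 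The paper's elementary-submodel argument is more self-contained precisely because it manufactures the antichain inside $S$ without this detour.
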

\begin{proof}
Fix a continuous chain of countable elementary submodels $M_0\prec...M_\alpha\prec M_{\alpha+1}\prec...$ of $H_{\omega_2}$ containing $S$. We check that $C=\{M_\alpha\cap \omega_1: \alpha<\omega_1\}$ is as desired.

Suppose otherwise and $s, t$ witness the failure. Assume $ht(s)=M_\xi\cap \omega_1$. Choose $\overline{\alpha}\in M_\xi$ such that $D_{s,t}\subseteq \overline{\alpha}$.

By elementarity, for every $\beta>\overline{\alpha}$, there are $s_\beta>_S s\up \overline{\alpha}$, $t_\beta>_S t\up \overline{\alpha}$ such that 
\begin{enumerate}
\item $ht(t_\beta)>ht(s_\beta)\geq \beta,~ D_{s_\beta, t_\beta}\subseteq \overline{\alpha} \text{ and } s_\beta^\smallfrown t_\beta\up [ht(s_\beta), ht(t_\beta))\notin S$.
\end{enumerate}

Now fix an uncountable $\Gamma\subseteq \omega_1$ such that
\begin{enumerate}\setcounter{enumi}{1}
\item  for every $\beta<\gamma$ in $\Gamma$, $ht(t_\beta)<ht(s_\gamma)$. In particular,  $s_\gamma\up ht(t_\beta)\in S$.
\end{enumerate}
 By (1) and (2), $\{s_\beta^\smallfrown t_\beta\up [ht(s_\beta), ht(t_\beta)): \beta\in \Gamma\}$ is an antichain in $\omega^{<\omega_1}$. 

Since $s_\beta^\smallfrown t_\beta\up [ht(s_\beta), ht(t_\beta))=(s\up \overline{\alpha})^\smallfrown t_\beta\up [\overline{\alpha}, ht(t_\beta))$,   $\{t_\beta: \beta\in \Gamma\}$ is an antichain in $S$. A contradiction.
\end{proof}

\section{The thin Kurepa family}

In this section, we introduce and analyze properties of the thin Kurepa family.  We then construct a model with a thin  Kurepa family. Throughout the paper, we use the following notations.

Say $\msa\subseteq [\omega_1]^{<\omega}$ is \emph{non-overlapping} if for $a\neq b$ in $\msa$, either $\max(a)<\min(b)$ or $\max(b)<\min(a)$.

\begin{defn}
For $a\subseteq \omega_1$  and $\mc{F}\subseteq P([\omega_1]^{<\omega})$ such that every $\msa\in \mcf$ is pairwise disjoint, $cl(a, \mc{F})$ is the $\subseteq$-least subset $a^*$ of $\omega_1$ such that 
\[a\subseteq a^*\text{ and for all $\ms{A}\in \mc{F}$ and all $b\in \ms{A}$, either $b\cap a^*=\emptyset$ or }b\subseteq a^*.\]
For $a\subseteq \omega_1$, say $a$ is \emph{$\mcf$-closed} if $cl(a, \mcf)=a$. 
\end{defn}
Equivalently, $cl(a, \mc{F})$ is $\bigcup_{n<\omega} a_n$ where 
\[a_0=a\text{ and }a_{n+1}=\bigcup\{b: b\in \msa\text{ for some $\msa\in \mcf$ and } b\cap a_n\neq \emptyset\}\] 
and $a$ is $\mcf$-closed if for all $\ms{A}\in \mc{F}$ and all $b\in \ms{A}$, either $b\cap a=\emptyset$ or $b\subseteq a$.

We first verify the following properties of $cl$.
\begin{lem}\label{lem closure}
Suppose $\mc{F}\subseteq P([\omega_1]^{<\omega})$ such that every $\msa\in \mcf$ is pairwise disjoint.
\begin{enumerate}[(i)]
\item  For $a\subseteq \omega_1$, $cl(a, \mcf)=\bigcup_{\alpha\in a} cl(\{\alpha\}, \mcf)$.
\item The collection of $\mcf$-closed sets is closed under complement and arbitrary union. In particular, it is a $\sigma$-algebra.
\item $\{cl(\{\alpha\}, \mcf): \alpha<\omega_1\}$ is a partition of $\omega_1$.
\end{enumerate}
\end{lem}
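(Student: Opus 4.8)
The plan is to work directly from the recursive description of $cl(a,\mcf)$ as $\bigcup_{n<\omega} a_n$, since all three parts follow by unwinding that recursion, together with the fact that each step of the recursion only adds blocks $b$ that already meet the current set.

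For (i), I would first observe that the recursion commutes with unions over $a$: writing $a = \bigcup_{\alpha\in a}\{\alpha\}$, a straightforward induction on $n$ shows $a_n = \bigcup_{\alpha\in a}\{\alpha\}_n$, where $\{\alpha\}_n$ denotes the $n$th stage of the recursion starting from $\{\alpha\}$. Taking unions over $n$ gives $cl(a,\mcf)=\bigcup_{\alpha\in a} cl(\{\alpha\},\mcf)$. The only mild subtlety is that $b\cap a_n\neq\emptyset$ iff $b$ meets $\{\alpha\}_n$ for some $\alpha\in a$, which is immediate from the inductive hypothesis $a_n=\bigcup_{\alpha\in a}\{\alpha\}_n$.

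For (ii), closure under arbitrary union is immediate from (i): if each $a_i$ is $\mcf$-closed, then $cl(\bigcup_i a_i,\mcf)=\bigcup_i cl(a_i,\mcf) = \bigcup_i a_i$, using (i) applied both to $\bigcup_i a_i$ and to each $a_i$ (so that $cl(\bigcup_i a_i,\mcf)=\bigcup_{\alpha\in\bigcup_i a_i} cl(\{\alpha\},\mcf) = \bigcup_i \bigcup_{\alpha\in a_i} cl(\{\alpha\},\mcf)=\bigcup_i cl(a_i,\mcf)$). For closure under complement, suppose $a$ is $\mcf$-closed and let $\alpha\in\omega_1\setminus a$; I must check $\omega_1\setminus a$ is $\mcf$-closed, i.e. that for every $\msa\in\mcf$ and $b\in\msa$, if $b\cap(\omega_1\setminus a)\neq\emptyset$ then $b\subseteq \omega_1\setminus a$. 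Equivalently, if $b\not\subseteq\omega_1\setminus a$, i.e. $b\cap a\neq\emptyset$, then $b\cap(\omega_1\setminus a)=\emptyset$, i.e. $b\subseteq a$ — but that is exactly the hypothesis that $a$ is $\mcf$-closed. Since the $\mcf$-closed sets contain $\emptyset$ and $\omega_1$ and are closed under complement and countable (indeed arbitrary) union, they form a $\sigma$-algebra.

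For (iii), I would show the sets $cl(\{\alpha\},\mcf)$ for $\alpha<\omega_1$ cover $\omega_1$ (each $\alpha$ lies in $cl(\{\alpha\},\mcf)$) and that any two are either equal or disjoint. Suppose $cl(\{\alpha\},\mcf)\cap cl(\{\beta\},\mcf)\neq\emptyset$, say $\gamma$ is in both. The key point is that membership in the closure is "reversible": I claim $\gamma\in cl(\{\alpha\},\mcf)$ implies $\alpha\in cl(\{\gamma\},\mcf)$, hence (by a short induction showing $cl$ is "transitive", $cl(\{\alpha\},\mcf)=\bigcup_{\delta\in cl(\{\alpha\},\mcf)} cl(\{\delta\},\mcf)$, which is a special case of (i) plus the observation that $cl(a,\mcf)$ is itself $\mcf$-closed) one gets $cl(\{\alpha\},\mcf)=cl(\{\gamma\},\mcf)=cl(\{\beta\},\mcf)$. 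To prove the claim, I track how $\gamma$ enters the recursion: $\gamma\in\{\alpha\}_n$ for some least $n$, and if $n>0$ there is $\msa\in\mcf$ and $b\in\msa$ with $\gamma\in b$ and $b\cap\{\alpha\}_{n-1}\neq\emptyset$; since the same block $b$ is used in the recursion building $cl(\{\gamma\},\mcf)$, an induction downward on $n$ shows every element of $\{\alpha\}_{n-1}$ that $b$ meets — in particular one leading eventually back to $\alpha$ — lies in $cl(\{\gamma\},\mcf)$. The bookkeeping in this last step is the main obstacle: one must set up the induction so that "$\delta$ and $\delta'$ lie in a common block $b$ of some $\msa\in\mcf$" is visibly a symmetric relation whose transitive-reflexive closure is exactly "$\delta'\in cl(\{\delta\},\mcf)$", after which symmetry of the closure and hence the partition property are immediate. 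Everything else is routine induction on $n$.
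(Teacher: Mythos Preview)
Your argument is correct, but parts (i) and especially (iii) take a different route from the paper. The paper leans throughout on the $\subseteq$-minimality description of $cl(a,\mcf)$, whereas you lean on the recursive description $\bigcup_n a_n$. For (i), the paper simply checks that $\bigcup_{\alpha\in a} cl(\{\alpha\},\mcf)$ is an $\mcf$-closed set containing $a$ (giving $\supseteq$) and that each $cl(\{\alpha\},\mcf)\subseteq cl(a,\mcf)$ by minimality (giving $\subseteq$); your stagewise identity $a_n=\bigcup_{\alpha\in a}\{\alpha\}_n$ gives the same conclusion with a little more computation. Part (ii) is handled essentially the same way in both.

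The real divergence is in (iii). You set up the equivalence-relation picture: the relation ``$\delta$ and $\delta'$ share a block'' is symmetric, its reflexive-transitive closure is exactly ``$\delta'\in cl(\{\delta\},\mcf)$'', hence the closures of singletons are equivalence classes. This is correct, and the ``reversibility'' bookkeeping you flag as the main obstacle is genuinely routine once phrased that way. The paper instead uses a one-line minimality trick that avoids the chain argument entirely: by (ii), both $cl(\{\alpha\},\mcf)\cap cl(\{\beta\},\mcf)$ and $cl(\{\alpha\},\mcf)\setminus cl(\{\beta\},\mcf)$ are $\mcf$-closed, and since $cl(\{\alpha\},\mcf)$ is the \emph{least} $\mcf$-closed set containing $\alpha$, whichever piece contains $\alpha$ must already be all of $cl(\{\alpha\},\mcf)$. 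This buys a shorter proof that never touches the recursion; your approach buys an explicit description of $cl(\{\alpha\},\mcf)$ as an equivalence class, which is conceptually transparent but costs you the induction you describe.
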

\begin{proof}
(i) $cl(a, \mcf)\supseteq cl(\{\alpha\}, \mcf)$ for every $\alpha\in a$ follows from the definition. 

To see $cl(a, \mcf)\subseteq \bigcup_{\alpha\in a} cl(\{\alpha\}, \mcf)$, first note that $a\subseteq \bigcup_{\alpha\in a} cl(\{\alpha\}, \mcf)$. Fix $\ms{A}\in \mc{F}$ and   $b\in \ms{A}$ with $b\cap (\bigcup_{\alpha\in a} cl(\{\alpha\}, \mcf))\neq \emptyset$. Then $b\cap cl(\{\alpha\}, \mcf)\neq \emptyset$ for some $\alpha\in a$ and hence $b\subseteq cl(\{\alpha\}, \mcf)\subseteq \bigcup_{\alpha\in a} cl(\{\alpha\}, \mcf)$.

(ii)  Fix $a=\bigcup_{\alpha<\kappa} a_\alpha$ where $\kappa\leq \omega_1$ and each $a_\alpha$ is $\mcf$-closed. Then by (i),
\[a=\bigcup_{\alpha<\kappa} a_\alpha=\bigcup_{\alpha<\kappa} \bigcup_{\beta\in a_\alpha} cl(\{\beta\}, \mcf)=\bigcup_{\beta\in a} cl(\{\beta\}, \mcf)=cl(a, \mcf).\]
Now it suffices to fix a $\mcf$-closed set $a$ and show that $\omega_1\setminus a$ is $\mcf$-closed.

 Fix $\ms{A}\in \mc{F}$ and   $b\in \ms{A}$.
 Since $a=cl(a, \mcf)$, either $b\cap a=\emptyset$ or $b\subseteq a$. Equivalently,  either $b\subseteq \omega_1\setminus a$ or $b\cap (\omega_1\setminus a)=\emptyset$. This shows that $\omega_1\setminus a$ is $\mcf$-closed.

(iii) It suffices to show that for $\alpha, \beta<\omega_1$, either $cl(\{\alpha\}, \mcf)=cl(\{\beta\}, \mcf)$ or $cl(\{\alpha\}, \mcf)\cap cl(\{\beta\}, \mcf)=\emptyset$. By symmetry, it suffices to show either $cl(\{\alpha\}, \mcf)\subseteq cl(\{\beta\}, \mcf)$ or $cl(\{\alpha\}, \mcf)\cap cl(\{\beta\}, \mcf)=\emptyset$.
By (ii), 
\[cl(\{\alpha\}, \mcf)=(cl(\{\alpha\}, \mcf)\cap cl(\{\beta\}, \mcf))\cup (cl(\{\alpha\}, \mcf)\setminus cl(\{\beta\}, \mcf))\]
 is a partition of $cl(\{\alpha\}, \mcf)$ into two $\mcf$-closed sets. By definition of $cl(\{\alpha\}, \mcf)$, either $cl(\{\alpha\}, \mcf)=cl(\{\alpha\}, \mcf)\cap cl(\{\beta\}, \mcf)$ or $cl(\{\alpha\}, \mcf)=cl(\{\alpha\}, \mcf)\setminus cl(\{\beta\}, \mcf)$. In other words, either $cl(\{\alpha\}, \mcf)\subseteq cl(\{\beta\}, \mcf)$ or $cl(\{\alpha\}, \mcf)\cap cl(\{\beta\}, \mcf)=\emptyset$.
\end{proof}

The thin Kurepa family we are going to define is too large to consider its closed sets. So instead of consider $cl(a, \mcf)$ for a thin Kurepa family $\mcf$, we will consider $cl(a, \mcf')$ for $a\in [\omega_1]^{<\omega}$ and $\mcf'\in [\mcf]^{<\omega}$.

For $\mc{F}\subseteq P([\omega_1]^{<\omega})$ and $\alpha<\omega_1$, $\mcf\up\alpha=\{\msa\cap [\alpha]^{<\omega}: \msa\in \mcf\}$.

\begin{defn}\label{defn tkf}
$\mc{F}\subseteq P([\omega_1]^{<\omega})$ is a \emph{thin Kurepa family} if $\mcf$ consists of non-overlapping families and the following conditions hold.
\begin{enumerate}[{\rm (K1)}]
\item For every $\alpha<\omega_1$, $|\mcf\up \alpha|\leq \omega$.
\item For every uncountable pairwise disjoint family $\msa\subseteq [\omega_1]^{<\omega}$, there exists $\msb\in \mcf$ such that $|\msa\cap \msb|=\omega_1$.
\item For every $a\in [\omega_1]^{<\omega}$ and every $\mcf'\in [\mcf]^{<\omega}$, $cl(a, \mcf')$ is finite.
\end{enumerate}
\end{defn}

Before constructing a thin Kurepa family, we first analyze its properties. Property (K3) requires that elements in $\mcf$ are thin.  We show that every thin Kurepa family is a Kurepa family if we do not distinguish $\omega_1$ and $[\omega_1]^{<\omega}$.
\begin{lem}
Every thin Kurepa family has size $>\omega_1$.
\end{lem}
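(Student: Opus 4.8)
The plan is to argue by contradiction: suppose $\mcf$ is a thin Kurepa family with $|\mcf|\le\omega_1$. The idea is to use (K1) to build, by transfinite recursion on $\alpha<\omega_1$, a witness that contradicts (K2). Concretely, enumerate $\mcf=\{\msb_\xi:\xi<\omega_1\}$ (this is where $|\mcf|\le\omega_1$ is used, together with the fact that $|\mcf|>\omega$, which is immediate from (K1) and (K2): if $\mcf$ were countable, take any uncountable pairwise disjoint $\msa$; by (K2) it meets some $\msb\in\mcf$ in an uncountable set, but then we can partition $\msa$ into two uncountable pieces and at most one meets $\msb$ uncountably, giving uncountably many subfamilies requiring distinct members of $\mcf$ — so I will first record that $\mcf$ is uncountable). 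I would then construct a single uncountable non-overlapping family $\msa=\{a_\alpha:\alpha<\omega_1\}$, with $\max(a_\alpha)<\min(a_{\alpha'})$ for $\alpha<\alpha'$, that diagonalizes against every $\msb_\xi$, so that $|\msa\cap\msb_\xi|<\omega_1$ for all $\xi$, contradicting (K2).

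The recursion is the heart of the argument. At stage $\alpha$, having chosen $a_\beta$ for $\beta<\alpha$ with $\sup_\beta\max(a_\beta)=\delta_\alpha<\omega_1$, I want to pick $a_\alpha\in[\omega_1\setminus\delta_\alpha]^{<\omega}$ (a singleton $\{\gamma_\alpha\}$ with $\gamma_\alpha\ge\delta_\alpha$ suffices) that avoids being put into the already-enumerated members of $\mcf$ "for the last time". The key tool is (K1): for any ordinal $\eta<\omega_1$, the trace $\mcf\up\eta$ is countable, meaning there are only countably many possibilities for $\msb_\xi\cap[\eta]^{<\omega}$ as $\xi$ ranges over $\omega_1$. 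Using a suitable bookkeeping function that lists pairs $(\xi,\cdot)$ cofinally, I will arrange that for each $\xi$, the set $\{\alpha:a_\alpha\in\msb_\xi\}$ is bounded in $\omega_1$. The cleanest way: at stage $\alpha$ handle the $\xi$ assigned by the bookkeeping, and choose $\gamma_\alpha\ge\delta_\alpha$ so that $\{\gamma_\alpha\}\notin\msb_\xi$ — this is possible because $\msb_\xi$ is pairwise disjoint (hence has at most one member containing each point) while there are uncountably many candidate points above $\delta_\alpha$; in fact I only need that $\{a:a\in\msb_\xi, \min(a)\ge\delta_\alpha, |a|=1\}$ does not exhaust all singletons above $\delta_\alpha$, which holds since such $a$'s are disjoint and there are only countably many of them below any fixed level while I have room to go higher. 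Actually, using singletons makes this transparent: $\msb_\xi$ contains at most countably many singletons below each countable level is automatic, but across all of $\omega_1$ it could contain uncountably many; the fix is that I only commit to finitely much of $\msb_\xi$ at each stage via the $\mcf\up\eta$ structure, so I should instead maintain that $a_\alpha$ is chosen to avoid $\bigcup\{\,b\in\msb_\xi : b\subseteq\delta_{\alpha+1}\,\}$, which is a finite-to-one bookkeeping over the countably many traces.

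Let me describe the step I expect to be the main obstacle and how I would resolve it. The subtlety is that $\msb_\xi$ can be uncountable, so I cannot "avoid all of $\msb_\xi$" in one stroke; I must use (K1) to see that, level by level, $\msb_\xi$ is tame. The right formulation: fix a bijection-like bookkeeping $\pi:\omega_1\to\omega_1$ with each value attained cofinally often. At stage $\alpha$ with $\xi=\pi(\alpha)$ and current bound $\delta_\alpha$, consider $\mcf\up(\delta_\alpha+\omega)$ — wait, I should rather choose $\gamma_\alpha$ first large and then note that whichever member of $\msb_\xi$ (if any) contains $\gamma_\alpha$ must have all its elements below some later bound, and then never pick into that member again. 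Since at each stage I kill the possibility of $\msb_\xi$ capturing $a_\alpha$ for cofinally many $\alpha$, and distinct stages assigned to $\xi$ are cofinal, I get $\{\alpha:a_\alpha\in\msb_\xi\}$ bounded. Then $\msa=\{a_\alpha:\alpha<\omega_1\}$ is uncountable, pairwise disjoint (even non-overlapping by construction), yet meets every $\msb\in\mcf$ in a countable set, contradicting (K2). I expect the bookkeeping details — ensuring the "avoidance" commitments from earlier stages remain satisfiable at later stages, which reduces to the pairwise-disjointness of each $\msb_\xi$ plus the countability of $\mcf\up\eta$ — to be the only genuinely delicate point; everything else is routine recursion.
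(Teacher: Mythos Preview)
Your high-level strategy matches the paper's: assume $|\mcf|\le\omega_1$, enumerate $\mcf=\{\msb_\xi:\xi<\omega_1\}$, and build by recursion an uncountable pairwise disjoint $\msa$ meeting each $\msb_\xi$ only countably, contradicting (K2). However, your execution has a real gap that you yourself almost spot but do not close.

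You propose taking $a_\alpha$ to be a singleton $\{\gamma_\alpha\}$. You then notice the problem: some $\msb_\xi$ might contain uncountably many singletons --- indeed, nothing in the definition prevents $\msb_\xi=\{\{\beta\}:\beta<\omega_1\}$ from belonging to $\mcf$ (it is non-overlapping, and (K1), (K3) are fine). Against such a $\msb_\xi$, \emph{every} singleton you pick lies in $\msb_\xi$, so $|\msa\cap\msb_\xi|=\omega_1$ no matter what. Your proposed fixes (invoking (K1), bookkeeping, ``never pick into that member again'') do not help here: if $\msb_\xi$ is the family of all singletons, the member of $\msb_\xi$ containing $\gamma_\alpha$ is $\{\gamma_\alpha\}$ itself, so you have already put $a_\alpha$ into $\msb_\xi$ at the moment you chose it.

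The paper's proof resolves this with one clean idea: use \emph{pairs} rather than singletons. At stage $\alpha$ choose $x_\alpha$ with $\{\alpha,x_\alpha\}\notin\msb_\xi$ for all $\xi<\alpha$. This is possible because each $\msb_\xi$ is pairwise disjoint, hence contains at most one doubleton through $\alpha$; with only countably many $\xi<\alpha$, only countably many values of $x_\alpha$ are forbidden. Then thin the resulting family $\{\{\alpha,x_\alpha\}:\alpha<\omega_1\}$ to an uncountable pairwise disjoint $\msa$. For fixed $\xi$, $\{\alpha,x_\alpha\}\in\msb_\xi$ forces $\alpha\le\xi$, so $|\msa\cap\msb_\xi|\le\omega$. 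Note that (K1) and the bookkeeping apparatus are not needed at all.
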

\begin{proof}
Suppose otherwise. For some $\kappa\leq \omega_1$, $\mcf=\{\msa^\alpha: \alpha<\kappa\}$ is a thin Kurepa family. For every $\alpha<\omega_1$, find $x_\alpha<\omega_1$ such that $\{\alpha, x_\alpha\}\notin \msa^\xi$ for all $\xi<\alpha$. Then find $\Gamma\in [\omega_1]^{\omega_1}$ such that $\msa=\{\{\alpha, x_\alpha\}: \alpha\in \Gamma\}$ is pairwise disjoint. It is clear that $|\msa\cap \msa^\xi|\leq \omega$ for all $\xi<\kappa$. This contradicts (K2).
\end{proof}

Being a thin Kurepa family is ccc indestructible.
\begin{lem}\label{lem ccc indestructible}
Suppose $\mcf$ is a thin Kurepa family and $\pp$ is a ccc poset. Then 
\[\Vdash_\pp \check{\mcf}\text{ is a thin Kurepa family}.\]
\end{lem}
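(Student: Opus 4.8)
The plan is to check the three defining properties (K1), (K2), (K3) of a thin Kurepa family in the extension $V[G]$, where $G$ is $\pp$-generic. Properties (K1) and (K3) are about finite or countable initial segments of $\mcf$ and do not really depend on the universe. Indeed, (K3) is absolute: for fixed $a\in[\omega_1]^{<\omega}$ and $\mcf'\in[\mcf]^{<\omega}$, the set $cl(a,\mcf')$ is computed by the explicit finite recursion $a_0=a$, $a_{n+1}=\bigcup\{b: b\in\msa$ for some $\msa\in\mcf'$ and $b\cap a_n\neq\emptyset\}$; since each $\msa\in\mcf'$ is non-overlapping (hence the $b\in\msa$ meeting a fixed finite set are finite in number and each finite), this recursion stabilizes at a finite set already in $V$, and the same computation yields the same finite set in $V[G]$ because $\pp$ adds no new elements to $[\omega_1]^{<\omega}$ with index in $V$. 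For (K1): $\pp$ is ccc, so it adds no new countable sequences of ground-model sets \emph{up to} — more precisely, for each $\alpha<\omega_1$, the map $\msa\mapsto\msa\cap[\alpha]^{<\omega}$ has countable range in $V$, namely $\mcf\up\alpha$, and this same countable set of subsets of $[\alpha]^{<\omega}$ is still (a superset of) $\check\mcf\up\alpha$ in $V[G]$; there is no new $\msa$ to worry about since $\check\mcf$ has the same elements. So $|\check\mcf\up\alpha|\le\omega$ still holds, noting $\omega_1$ is preserved by ccc forcing.

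The only substantive point is (K2): given a $\pp$-name $\dot{\msa}$ for an uncountable pairwise disjoint family $\subseteq[\omega_1]^{<\omega}$ in $V[G]$, I must find (in $V[G]$, hence it suffices to find below an arbitrary condition) some $\msb\in\mcf$ with $|\dot{\msa}^G\cap\msb|=\omega_1$. First I would work in $V$: below a fixed condition $p$, use the ccc of $\pp$ together with a $\Delta$-system / pressing-down argument to extract in $V$ an uncountable pairwise disjoint subfamily $\msa^*\subseteq[\omega_1]^{<\omega}$ together with, for each $b\in\msa^*$, a condition $p_b\le p$ forcing $b\in\dot{\msa}$. Concretely: for each $\alpha<\omega_1$ pick (by the name and ccc, maximally refining) a condition and a finite set $b_\alpha$ it forces into $\dot\msa$ with $\min(b_\alpha)\ge\alpha$; thin out so the $b_\alpha$ are pairwise disjoint (they are non-overlapping after passing to an uncountable subset, since $\min(b_\alpha)\to\omega_1$). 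Then $\msa^*=\{b_\alpha:\alpha\in\Gamma\}\in V$ is an uncountable pairwise disjoint family, so by (K2) applied \emph{in $V$} there is $\msb\in\mcf$ with $|\msa^*\cap\msb|=\omega_1$.

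It remains to transfer this back: I need that uncountably many of the $b\in\msa^*\cap\msb$ actually land in $\dot\msa^G$. This is where I would invoke ccc once more. Let $\msa^*\cap\msb=\{b_\alpha:\alpha\in\Gamma'\}$, $|\Gamma'|=\omega_1$, with conditions $p_{b_\alpha}\le p$ forcing $b_\alpha\in\dot\msa$. The set $A=\{b_\alpha:\alpha\in\Gamma', p_{b_\alpha}\in G\}$ is then forced into $\dot\msa^G\cap\msb$; I claim $p$ forces $A$ to be uncountable. If not, some $q\le p$ and countable $\gamma$ force $A\subseteq\{b_\alpha:\alpha<\gamma\}$, i.e.\ $q$ forces $p_{b_\alpha}\notin G$ for all $\alpha\in\Gamma'\setminus\gamma$ — but $q$ is compatible with $p_{b_\alpha}$ for some such $\alpha$ (else $\{p_{b_\alpha}:\alpha\in\Gamma'\setminus\gamma\}$ would witness, below $q$... — more carefully, use that $q\not\perp p_{b_\alpha}$ for uncountably many $\alpha$, which holds because otherwise the remaining $p_{b_\alpha}$ together with $q$-incompatibility would contradict ccc of $\pp$), and then a common extension forces $p_{b_\alpha}\in G$, a contradiction. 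Hence $\Vdash_\pp|\dot\msa^G\cap\check\msb|=\omega_1$, establishing (K2). I expect this last transfer step — correctly quantifying the "uncountably many $p_{b_\alpha}$ compatible with $q$" claim via ccc — to be the main obstacle, though it is a standard ccc absoluteness maneuver.
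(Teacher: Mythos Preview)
Your approach matches the paper's exactly: (K1) and (K3) are absolute, and for (K2) you pull back an uncountable pairwise disjoint family to $V$, apply (K2) there, and push forward using ccc.

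The only slip is in the final transfer step. It is not true in general that $p$ itself forces $A$ to be uncountable, and your sketched justification (``$q\perp p_{b_\alpha}$ for all large $\alpha$ contradicts ccc'') does not work: a single condition being incompatible with uncountably many others is no violation of ccc, since those others need not be pairwise incompatible. What \emph{is} true---and what the paper invokes---is the standard ccc lemma: given uncountably many conditions $\{p_{b_\alpha}:\alpha\in\Gamma'\}$ below $p$, there exists \emph{some} $q\le p$ forcing $\{\alpha\in\Gamma': p_{b_\alpha}\in\dot G\}$ to be uncountable. (Proof: otherwise $1\Vdash$ the set is bounded; a countable maximal antichain of bounds gives a uniform bound $\beta<\omega_1$, contradicting $p_{b_\beta}\Vdash\beta\in\dot X$.) That $q$ then forces $|\dot\msa\cap\check\msb|=\omega_1$, and (K2) follows by the density argument you describe. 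So the fix is simply to replace ``$p$ forces'' by ``some $q\le p$ forces'' and cite this lemma.
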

\begin{proof}
Note that (K1) and (K3) are absolute. To show (K2), fix $p\in \pp$ and $\dot{\msa}$ such that
\[p\Vdash \dot{\msa}\text{ is an uncountable pairwise disjoint family contained in } [\omega_1]^{<\omega}.\]
For every $\alpha<\omega_1$, find $p_\alpha< p$ and $a_\alpha\in [\omega_1\setminus \alpha]^{<\omega}$ such that
$p_\alpha\Vdash a_\alpha\in \dot{\msa}$.
Find $\Gamma\in [\omega_1]^{\omega_1}$ such that
\[\msb=\{a_\alpha: \alpha\in \Gamma\}\text{ is pairwise disjoint}.\]
By (K2), find $\msb'\in \mcf$ such that $\msb\cap \msb'$ is uncountable. Let
\[\Gamma'=\{\alpha\in \Gamma: a_\alpha\in \msb'\}.\]
Since $\pp$ is ccc, there exists $q<p$ such that
\[q\Vdash \{\alpha\in \Gamma': p_\alpha\in \dot{G}\}\text{ is uncountable}\]
where $\dot{G}$ is the canonical name of the generic filter. Then
\[q\Vdash \dot{\msa}\cap \msb'\supseteq \{a_\alpha: \alpha\in \Gamma', p_\alpha\in \dot{G}\}\text{ is uncountable}.\]
Now (K2) follows from a density argument.
\end{proof}

As presented in Section 4, the thin Kurepa family may be used to construct  structures with properties of type precaliber $\omega_1$, i.e., every uncountable subset has an uncountable subset with some additional property. The construction at limit stages $\alpha$ may need a partition of $\alpha$ into $\omega$ intervals with each interval `almost' not affected by intervals below.\footnote{See the proof of Theorem \ref{thm coloring} for an example of applying the partition to control affection between intervals.}  We describe the partition in the following lemma.

\begin{lem}\label{lem tkf ladder}
Suppose $\mcf$ is a thin Kurepa family and $\mcf\up \alpha=\{\msa^n: n<\omega\}$ where $\alpha$ is a countable limit ordinal. Then there are an increasing sequence of ordinals $\langle \alpha_n: n<\omega\rangle$ cofinal in $\alpha$ and a sequence $\langle F_n\in [\alpha]^{<\omega}: n<\omega\rangle$  satisfying the following properties.
\begin{enumerate}[(i)]
\item $F_n$'s are pairwise disjoint.
\item For every $n<\omega$, $F_n=cl(F_n, \{\msa^i: i<n\})\subseteq (\alpha_{n-1}, \alpha_{n+1})$ and $\alpha_n\cup F_n=cl(\alpha_n\cup F_n, \{\msa^i: i<n\})$.
\item For every $n<\omega$ and every $\beta\in [\alpha_{n}, \alpha_{n+1})$, 
\[cl(\{\beta\}, \{\msa^i: i<n\})\subseteq [\alpha_n, \alpha_{n+1})\cup F_n\cup F_{n+1}.\]
\end{enumerate}
\end{lem}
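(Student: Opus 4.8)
The idea is to build the two sequences $\langle\alpha_n\rangle$ and $\langle F_n\rangle$ simultaneously by recursion on $n$, using (K3) repeatedly to guarantee finiteness of all the closures that come up. At each step $n$ I have already fixed $\alpha_0<\dots<\alpha_n$ and $F_0,\dots,F_{n-1}$; to produce $\alpha_{n+1}$ and $F_n$ I first need to understand which ordinals near $\alpha_n$ get "pulled up" when I close under $\{\msa^i:i<n\}$. The key observation is that by Lemma \ref{lem closure}(i), $cl(\alpha_n,\{\msa^i:i<n\})=\bigcup_{\gamma<\alpha_n}cl(\{\gamma\},\{\msa^i:i<n\})$, and although this is in general not bounded below $\alpha$, each individual $cl(\{\gamma\},\{\msa^i:i<n\})$ is finite by (K3), and only finitely many of the non-overlapping blocks of $\msa^0,\dots,\msa^{n-1}$ can straddle $\alpha_n$ (a block $b$ with $\min(b)<\alpha_n\le\max(b)$ contributes ordinals above $\alpha_n$, but since each $\msa^i$ is non-overlapping there is at most one such straddling block per $i$, hence at most $n$ total). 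So I can set $F_n$ to consist of exactly those finitely many straddling ordinals lying at or above $\alpha_n$, together with their closure under $\{\msa^i:i<n\}$ — this is finite by (K3) — and then choose $\alpha_{n+1}$ large enough to (a) exceed $\alpha$-cofinally-increasingly, say $\alpha_{n+1}>\alpha_n$ and $\sup_n\alpha_n=\alpha$ arranged by also demanding $\alpha_{n+1}$ beat the $n$th element of some fixed cofinal $\omega$-sequence in $\alpha$, and (b) strictly bound $F_n$, i.e. $\max(F_n)<\alpha_{n+1}$.

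For the recursion to close I must be careful about the ordering of the choices: $F_n$ is defined from $\alpha_n$ (and the $\msa^i$, $i<n$), and then $\alpha_{n+1}$ is chosen after $F_n$ is known, so that $F_n\subseteq(\alpha_{n-1},\alpha_{n+1})$ can be arranged — the lower bound $\alpha_{n-1}<\min(F_n)$ holds because every ordinal of $F_n$ is $\ge\alpha_n>\alpha_{n-1}$ (here I read $\alpha_{-1}$ as $0$), and the upper bound $\max(F_n)<\alpha_{n+1}$ is exactly (b). To get the second half of (ii), that $\alpha_n\cup F_n$ is already $\{\msa^i:i<n\}$-closed, note that $F_n$ was defined as a closure under precisely these families of a set of ordinals all $\ge\alpha_n$, and $\alpha_n$ itself, if not closed, has its extra points — the images of straddling blocks — already absorbed into $F_n$ by construction; this needs a short argument verifying that no block $b\in\msa^i$ ($i<n$) meets $\alpha_n\cup F_n$ without being contained in it, splitting into the case $\max(b)<\alpha_n$ (then $b\subseteq\alpha_n$ trivially, or $b\cap\alpha_n=\emptyset$ is impossible) and the case $\max(b)\ge\alpha_n$ (then $b$ straddles or lies above $\alpha_n$; if it meets $\alpha_n\cup F_n$ at all it meets $F_n$ or is the straddling block, and either way $b\subseteq F_n$ by definition of the closure).

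For property (iii): fix $n$ and $\beta\in[\alpha_n,\alpha_{n+1})$ and consider $cl(\{\beta\},\{\msa^i:i<n\})$, a finite set by (K3). I want to show every ordinal $\delta$ in it lies in $[\alpha_n,\alpha_{n+1})\cup F_n\cup F_{n+1}$. Such a $\delta$ is reached from $\beta$ by a finite chain of blocks from $\msa^0,\dots,\msa^{n-1}$; I argue by induction along this chain that we never leave $[\alpha_n,\alpha_{n+1})\cup F_n\cup F_{n+1}$. The point is: if the current point is in $[\alpha_n,\alpha_{n+1})$, the next block $b$ containing it is either $\subseteq[\alpha_n,\alpha_{n+1})$ (stay), or straddles $\alpha_n$ — then $b\subseteq\alpha_n\cup F_n$ by the closedness in (ii), so its part above $\alpha_n$ lies in $F_n$ — or straddles $\alpha_{n+1}$, and here I need that $\alpha_{n+1}\cup F_{n+1}$ is $\{\msa^i:i<n+1\}$-closed hence in particular $\{\msa^i:i<n\}$-closed, so $b\subseteq\alpha_{n+1}\cup F_{n+1}$ and its part above $\alpha_{n+1}$ lies in $F_{n+1}$; once inside $F_n$ or $F_{n+1}$ we stay there since those sets are closed under $\{\msa^i:i<n\}$ by (ii). The main obstacle is getting the bookkeeping of (ii) exactly right so that it feeds both halves of (iii) — in particular making sure $F_n$ is defined to absorb the straddling blocks at $\alpha_n$ "from above" while $F_{n+1}$ absorbs those at $\alpha_{n+1}$, with no block escaping; finiteness everywhere is free from (K3) and non-overlapping-ness, so no cardinality issue arises, only a careful case analysis.
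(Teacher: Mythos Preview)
Your overall plan is right, and your $F_n$ --- the $\{\msa^i:i<n\}$-closure of the blocks straddling $\alpha_n$ --- agrees with the paper's. But your argument for the lower bound in (ii) contains a genuine error: you assert that ``every ordinal of $F_n$ is $\ge\alpha_n$,'' and this is false. Taking the closure can pull in ordinals strictly below $\alpha_n$. Concretely, if $b\in\msa^0$ straddles $\alpha_n$, closing adds all of $b$, including its part below $\alpha_n$; a block $b'\in\msa^1$ meeting that low part and lying entirely below $\alpha_n$ then gets absorbed too, and so on downward. Nothing in your recursion prevents $F_n$ from dipping at or below $\alpha_{n-1}$, so neither the lower bound in (ii) nor the pairwise disjointness (i) is established.

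The paper repairs this with a one-step lookahead when choosing $\alpha_{n+1}$: rather than merely demanding $\alpha_{n+1}>\max(F_n)$, it requires $\alpha_{n+1}$ to bound the $\{\msa^i:i\le n\}$-closure (note: $i\le n$, not just $i<n$) of the blocks straddling $\alpha_n$. This extra headroom for $\msa^n$ is precisely what is needed one step later: the set $\alpha_n$ together with that closure is $\{\msa^i:i\le n\}$-closed and contained in $\alpha_{n+1}$, so its complement is $\{\msa^i:i<n+1\}$-closed and contains every block straddling $\alpha_{n+1}$; hence $F_{n+1}$, being the $\{\msa^i:i<n+1\}$-closure of those straddlers, stays disjoint from $\alpha_n$ (and from $F_n$). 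Your recursion, which consults only $\msa^0,\dots,\msa^{n-1}$ when choosing $\alpha_{n+1}$, cannot supply this.
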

\begin{proof}
Let $\langle \beta_n: n<\omega\rangle$ be an increasing sequence cofinal in $\alpha$. We inductively choose an increasing sequence $\langle \alpha_n: n<\omega\rangle$ such that $\beta_n\leq \alpha_n<\alpha$ and
\begin{enumerate}
\item  $cl(\bigcup A_n, \{\msa^i: i\leq n\})\subseteq   \alpha_{n+1}$ where
\[A_n=\{a\in \bigcup_{i\leq n}\msa^i:  \min(a)\leq \alpha_n\leq \max(a)\}.\].
\end{enumerate}
To see the existence of $\alpha_{n+1}<\alpha$, note each $\msa^i\subseteq [\alpha]^{<\omega}$ and hence $cl(\bigcup A_n, \{\msa^i: i\leq n\})\subseteq  \alpha$. Since $\msa^i$'s are non-overlapping, $A_n$ is finite. By (K3), $cl(\bigcup A_n, \{\msa^i: i\leq n\})$ is finite.  So $cl(\bigcup A_n, \{\msa^i: i\leq n\})\in [\alpha]^{<\omega}$.

For $n<\omega$, denote
\[B_n=\{a\in \bigcup_{i<n}\msa^i:  \min(a)\leq\alpha_n\leq \max(a)\}\text{ and}\]
\[F_n=cl(\bigcup B_n, \{\msa^i: i<n\}).\]
Then $B_n\subseteq A_n$. 
We check that 
\begin{enumerate}\setcounter{enumi}{1}
\item $\alpha_n \cup F_n$ is $\{\msa^i: i<n\}$-closed.
\end{enumerate}
To see this, recall that $F_n$ is $\{\msa^i: i<n\}$-closed.  Fix $i<n$ and $b\in \msa^i$ with $b\cap (\alpha_n\cup F_n)\neq \emptyset$. If $b\cap F_n\neq\emptyset$, then $b\subseteq F_n$. If $b\cap \alpha_n\neq\emptyset$, then either $b\subseteq \alpha_n$ or $b\in B_n$ and hence $b\subseteq F_n$.

The same argument shows that
\begin{enumerate}\setcounter{enumi}{2}
\item $\alpha_n \cup cl(\bigcup A_n, \{\msa^i: i\leq n\})$ is $\{\msa^i: i\leq n\}$-closed.
\end{enumerate}
By (1), $(\bigcup B_{n+1})\cap (\alpha_n\cup cl(A_n, \{\msa^i: i\leq n\}))=\emptyset$. By (3) and Lemma \ref{lem closure} (ii),
\begin{enumerate}\setcounter{enumi}{3}
\item $F_{n+1}\cap (\alpha_n\cup cl(\bigcup A_n, \{\msa^i: i\leq n\}))=\emptyset$.
\end{enumerate}

We check that $\langle \alpha_n: n<\omega\rangle$  and $\langle F_n\in [\alpha]^{<\omega}: n<\omega\rangle$ are as desired. 

To see (i), note by (1) and the fact $B_n\subseteq A_n$, $F_n\subseteq \alpha_{n+1}$.
Together with (4), $F_n\subseteq [\alpha_{n-1}, \alpha_{n+1})$ and $F_n$'s are pairwise disjoint.

To see (ii), we only need to verify that $\alpha_{n-1}\notin F_n$. Suppose otherwise. Note $\bigcup B_n$ and hence $F_n$ is a union of elements in $\bigcup_{i<n} \msa^i$. 
So for some $i<n$ and $a\in \msa^i$, $\alpha_{n-1}\in a$. But then $a\in A_{n-1}$ and $\alpha_{n-1}\in \bigcup A_{n-1}$. This contradicts (4).

To see (iii), fix $n$ and $\beta\in [\alpha_n, \alpha_{n+1})$. By   the definition of $F_n$ and (2),  $F_n$,  $\alpha_n\cup F_n$ and $\alpha_{n+1}\cup F_{n+1}$ are all  $\{\msa^i: i<n\}$-closed. By Lemma \ref{lem closure} (ii), 
\[\alpha_{n+1}\cup F_{n+1}\setminus (\alpha_n\setminus F_n)=[\alpha_n, \alpha_{n+1})\cup F_n\cup F_{n+1}\]
 is $\{\msa^i: i<n\}$-closed. So $cl(\{\beta\}, \{\msa^i: i<n\})\subseteq [\alpha_n, \alpha_{n+1})\cup F_n\cup F_{n+1}$.
\end{proof}

We introduce a strengthening of the thin Kurepa family which may be useful in constructing specific structures. The strengthening is ccc destructible.
\begin{defn}
A thin Kurepa family $\mcf$ is  \emph{cofinal} if the following statement holds.
\begin{enumerate}[{\rm (K2)$^+$}]
\item Every uncountable pairwise disjoint family $\msa\subseteq [\omega_1]^{<\omega}$ has an uncountable subfamily in $\mcf$.
\end{enumerate}
\end{defn}

Recall \cite[Theorem III.7.6]{Kunen}, $\diamondsuit^+$ implies the existence of a cofinal Kurepa family, i.e., a Kurepa family $\mcf\subseteq P(\omega_1)$ such that every $A\in [\omega_1]^{\omega_1}$ has an uncountable subset in $\mcf$.  We apply this idea to induce a cofinal thin Kurepa family  from a thin Kurepa family under $\diamondsuit^+$.
\begin{lem}\label{lem ctkf}
Assume $\diamondsuit^+$. If there is a thin Kurepa family, then there is a cofinal one.
\end{lem}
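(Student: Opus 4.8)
The plan is to graft onto the given thin Kurepa family a cofinal Kurepa family produced by $\diamondsuit^+$, thereby transporting cofinality onto subfamilies of the given family. Fix a thin Kurepa family $\mcf$ and, by \cite[Theorem III.7.6]{Kunen}, a cofinal Kurepa family $\mc{K}\subseteq P(\omega_1)$; thus $|\{Y\cap\alpha:Y\in\mc{K}\}|\leq\omega$ for each $\alpha<\omega_1$, and every $X\in[\omega_1]^{\omega_1}$ has an uncountable subset in $\mc{K}$. For $\msb\in\mcf$ and $Y\in\mc{K}$ set $\msb_Y=\{b\in\msb:\min(b)\in Y\}$, and put
\[\mcf'=\mcf\cup\{\msb_Y:\msb\in\mcf,\ Y\in\mc{K}\}.\]
Each $\msb_Y$ is a subfamily of $\msb$, hence non-overlapping, so $\mcf'$ consists of non-overlapping families; also $\mcf\subseteq\mcf'$, so $|\mcf'|\geq\omega_2$. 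It remains to check (K1), (K3) and the cofinality clause (K2)$^+$ (which at once yields (K2), taking the uncountable subfamily itself as witness).

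For (K3): given $a\in[\omega_1]^{<\omega}$ and $\mcf''\in[\mcf']^{<\omega}$, each member of $\mcf''$ is contained in some member of $\mcf$ (itself, if it already lies in $\mcf$; otherwise $\msb$, if it equals some $\msb_Y$). Since passing from a family to a larger one can only enlarge $cl(a,\cdot)$, we get $cl(a,\mcf'')\subseteq cl(a,\mcf_0)$ where $\mcf_0\in[\mcf]^{<\omega}$ collects these $\mcf$-members, and $cl(a,\mcf_0)$ is finite by (K3) for $\mcf$; this is the step that forces the design requirement $\msb_Y\subseteq\msb$. For (K1): note that $\msb_Y\cap[\alpha]^{<\omega}=\{b\in\msb\cap[\alpha]^{<\omega}:\min(b)\in Y\cap\alpha\}$ depends only on the pair $(\msb\cap[\alpha]^{<\omega},\,Y\cap\alpha)$, which ranges over the countable set $(\mcf\up\alpha)\times\{Y\cap\alpha:Y\in\mc{K}\}$ — countable by (K1) for $\mcf$ and because $\mc{K}$ is a Kurepa family. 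Hence $|\mcf'\up\alpha|\leq\omega$. (Here it is essential that $\msb_Y$ is defined through the minima of the blocks of $\msb$: a variant such as $\{b\in\msb:b\cap Y\neq\emptyset\}$ would not be determined by $Y\cap\alpha$.)

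For (K2)$^+$: let $\msa\subseteq[\omega_1]^{<\omega}$ be uncountable and pairwise disjoint. A recursion of length $\omega_1$ — at each stage choosing a member of $\msa$ whose minimum exceeds the supremum of the finitely many maxima chosen so far, which is possible since only countably many members of $\msa$ can meet a fixed countable ordinal — produces an uncountable non-overlapping $\msa'\subseteq\msa$, and it suffices to find an uncountable subfamily of $\msa'$ in $\mcf'$. By (K2) for $\mcf$ choose $\msb\in\mcf$ with $|\msa'\cap\msb|=\omega_1$. Since $\msa'\cap\msb$ is non-overlapping, distinct members have distinct minima, so $Z=\{\min(b):b\in\msa'\cap\msb\}$ has size $\omega_1$; by cofinality of $\mc{K}$ fix $Y\in\mc{K}$ with $Y\subseteq Z$ and $|Y|=\omega_1$. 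As a non-overlapping family has at most one member with any prescribed minimum, $\msb_Y=\{b\in\msb:\min(b)\in Y\}$ equals $\{b\in\msa'\cap\msb:\min(b)\in Y\}$, an uncountable subfamily of $\msa'$ — hence of $\msa$ — that belongs to $\mcf'$.

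The only genuinely delicate point is the choice of the amalgamation map $(\msb,Y)\mapsto\msb_Y$: it must land inside a member of $\mcf$ (needed for (K3)), it must be computable levelwise from $Y\cap\alpha$ (needed for (K1)), and it must be rich enough that the cofinality of $\mc{K}$ can be detected inside an arbitrary uncountable pairwise disjoint family (needed for (K2)$^+$). The block-minimum map meets all three demands, and everything else is routine bookkeeping.
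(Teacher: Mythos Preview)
Your proof is correct and takes a genuinely different, cleaner route than the paper. The paper works directly with a $\diamondsuit^+$-sequence $\langle\mc{A}_\alpha\rangle$: it defines selectors $f(\msa,\alpha,\beta)$ and $g(\msa,C)$ that pick one block from each interval of a guessed club $C$, builds local approximations $\mcf_\beta$, and then takes $\mcf$ to be the uncountable $g(\msa\cap\msb,C)$ whose initial segments all land in the $\mcf_\beta$'s. You instead factor the argument: first invoke $\diamondsuit^+$ only to get a cofinal Kurepa family $\mc{K}\subseteq P(\omega_1)$ (a known black box), and then graft $\mc{K}$ onto the given $\mcf$ via the single map $(\msb,Y)\mapsto\{b\in\msb:\min(b)\in Y\}$. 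This avoids all the club-guessing bookkeeping; the three design constraints you isolate (subset of some $\msb\in\mcf$, levelwise determinacy, richness) are exactly what the paper's more elaborate construction is implicitly enforcing. Your approach buys simplicity and modularity; the paper's approach is self-contained and shows explicitly how the $\diamondsuit^+$-sequence is used.

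One small inaccuracy that does not affect the argument: your parenthetical claim that the variant $\{b\in\msb:b\cap Y\neq\emptyset\}$ would fail (K1) is wrong --- for $b\subseteq\alpha$ one has $b\cap Y=b\cap(Y\cap\alpha)$, so that variant is also determined by $(\msb\cap[\alpha]^{<\omega},Y\cap\alpha)$. The block-minimum map is still the natural choice, but the stated reason for rejecting the alternative is not the right one.
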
 
\begin{proof}
Let  $\mcf^*$ be a thin Kurepa family. Let $\langle\mc{A}_\alpha: \alpha<\omega_1\rangle$ be a $\diamondsuit^+$ sequence in the sense that
\begin{enumerate}
\item $\forall \msa\subseteq [\omega_1]^{<\omega}~\exists C~ (C \text{ is a club } \wedge \forall \alpha\in C~ (\msa\cap [\alpha]^{<\omega}\in \mc{A}_\alpha\wedge C\cap \alpha\in \mc{A}_\alpha))$.
\end{enumerate}
Assume in addition that
\begin{enumerate}\setcounter{enumi}{1}
\item $\emptyset\in \mc{A}_0$ and $\mc{A}_\alpha\subseteq \mc{A}_\beta$ for $\alpha<\beta$.
\end{enumerate}

For $\alpha<\beta<\omega_1$ and a pairwise disjoint family $\msa$ with $\msa\cap [\beta]^{<\omega}\setminus [\alpha]^{<\omega}\neq \emptyset$, define $f(\msa, \alpha, \beta)$ to be the element  $a\in\msa\cap [\beta]^{<\omega}\setminus [\alpha]^{<\omega}$ with $\min(a)$ minimal.

For a pairwise disjoint family $\msa$ and  $C\subseteq \omega_1$, $g(\msa, C)$ is the collection of $f(\msa, C(\alpha), C(\alpha+1))$ such that $\alpha<\omega_1$ and $\msa\cap [C(\alpha+1)]^{<\omega}\setminus [C(\alpha)]^{<\omega}\neq \emptyset$.

Now for $\beta<\omega_1$, let $\mcf_\beta$ be the collection of all $a\cup g(\msa\cap \msb, C)$ such that 
\begin{itemize}
\item $a\in [[\beta]^{<\omega}]^{<\omega}$, $\msa\in \mcf^*\up\beta$, $\msb\in \mc{A}_\beta\cap P([\beta]^{<\omega})$ and $C\in \mc{A}_\beta$ is closed in $\beta$;
\item  $a\cup g(\msa\cap \msb, C)$ is non-overlapping.
\end{itemize}

Let $\mcf$ be the collection of all uncountable $g(\msa\cap \msb, C)$ such that $\msa\in \mcf^*$, $\msb\subseteq [\omega_1]^{<\omega}$ and $g(\msa\cap \msb, C)\cap [\beta]^{<\omega}\in \mcf_\beta$ for all $\beta<\omega_1$. We check that $\mcf$ is as desired. First note that every element $\msa\in\mcf$ is an uncountable subset of some $\msb\in \mcf^*$.

(K1) follows from the fact that every $\mcf_\beta$ is countable.

To see (K2)$^+$, fix an uncountable pairwise disjoint family $\msb\subseteq [\omega_1]^{<\omega}$. By (1), find a club $C$  such that for all $\beta\in C$, $\msb\cap [\beta]^{<\omega}\in \mc{A}_\beta$ and $C\cap \beta\in \mc{A}_\beta$. Then by (K2) for $\mcf^*$, find $\msa\in \mcf^*$ such that $\msa\cap \msb$ is uncountable. Now it is straightforward to verify that $g(\msa\cap \msb, C)\in \mcf$. Now (K2)$^+$ follows from the fact that $g(\msa\cap \msb, C)\subseteq \msa\cap \msb\subseteq \msb$.

To see (K3), fix $a\in [\omega_1]^{<\omega}$ and $\mcf'\in [\mcf]^{<\omega}$. Find $\mcf''\in [\mcf^*]^{<\omega}$ such that every element $\msa\in\mcf'$ is contained in some $\msb\in \mcf''$.  Then $cl(a, \mcf')\subseteq cl(a, \mcf'')$ is finite.
\end{proof}

The cofinal property (K2)$^+$ might be useful in constructing specific structures, especially with $\diamondsuit^+$. 
However, unlike the thin Kurepa family, the cofinal property (K2)$^+$ is ccc destructible. For example, adding $\omega_1$ Cohen reals, $Fn(\omega_1, 2)$, destroys (K2)$^+$, even if we take the completion of the thin Kurepa family $\mcf$ in the extension: 
\[\{\msa\subseteq [\omega_1]^{<\omega}: \forall \alpha<\omega_1 ~\msa\cap [\alpha]^{<\omega}\in \mcf\up \alpha\}.\]
Note that (K3) is preserved under taking completion: For $a\in [\omega_1]^{<\omega}$ and finite $\mcf'\subseteq P([\omega_1]^{<\omega})$ consisting of pairwise disjoint families, $|cl(a, \mcf')|\leq \omega$ and hence $cl(a, \mcf')=cl(a, \mcf'\up \alpha)$ for some $\alpha<\omega_1$.

Thin Kurepa families capture uncountable pairwise disjoint families. Note that every uncountable family $\subseteq [\omega_1]^{<\omega}$ contains an uncountable subfamily that is a $\Delta$-system. To capture  arbitrary uncountable families $\subseteq [\omega_1]^{<\omega}$, we only need to add roots to elements in a thin Kurepa family.

\begin{defn}
 For $a\subseteq \omega_1$ and $\mcf\subseteq P([\omega_1]^{<\omega})$ consisting of $\Delta$-systems, $cl(a,\mcf)$ is the $\subseteq$-least subset $a^*$ of $\omega_1$ such that
\begin{itemize}
\item $a\cup \bigcup\{root(\msa): \msa\in \mcf\}\subseteq a^*$ where $root(\msa)$ is the root of $\msa$;
\item   for every $\msa\in \mcf$ and every $b\in \msa$, either $(b\setminus root(\msa))\cap a^*=\emptyset$ or $b\subseteq a^*$.
\end{itemize}
\end{defn}

It turns out that adding roots to a thin Kurepa family preserves the corresponding properties.
\begin{lem}
If there is a thin Kurepa family, then there is a family $\mcf\subseteq P([\omega_1]^{<\omega})$ that consists of $\Delta$-systems and satisfies {\rm (K1)}, {\rm (K3)} and the following statement.
\begin{enumerate}[{\rm (K2)$'$}]
\item For every uncountable  $\msa\subseteq [\omega_1]^{<\omega}$, there exists $\msb\in \mcf$ with  $|\msa\cap\msb|=\omega_1$.
\end{enumerate}
\end{lem}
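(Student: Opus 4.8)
The plan is to start from a thin Kurepa family $\mcf^*$ and build $\mcf$ by attaching roots to (subfamilies of) the members of $\mcf^*$, mirroring the standard fact that an uncountable family in $[\omega_1]^{<\omega}$ always contains an uncountable $\Delta$-system whose ``leaves'' form a non-overlapping family. Concretely, for each possible finite root $r\in[\omega_1]^{<\omega}$ and each $\msa\in\mcf^*$ consisting of sets lying entirely above $\max(r)$, put the $\Delta$-system $\{r\cup a:a\in\msa\}$ into $\mcf$ — but, to keep control of (K1), only when $r$ and $\msa$ are suitably coherent with a fixed bookkeeping: I would index things so that $\msa$ ranges over $\mcf^*\up\alpha$-germs and $r\subseteq\alpha$, and take $\mcf$ to be the collection of all such $\{r\cup a:a\in\msa\}$ with $\msa\in\mcf^*$ and $r\in[\omega_1]^{<\omega}$ — this is a set of size $\leq|\mcf^*|$, and the point is that it still has countably many level-$\alpha$ restrictions.

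First I would verify (K1): for each $\alpha<\omega_1$, a restriction $(\{r\cup a:a\in\msa\})\cap[\alpha]^{<\omega}$ is determined by $r\cap\alpha$ (finitely many options once we know it is $\subseteq\alpha$, but in general $r$ may stick out — then the restriction is controlled by $\msa\cap[\alpha]^{<\omega}\in\mcf^*\up\alpha$ together with which $a$'s have $r\cup a\subseteq\alpha$, i.e.\ with $\max(r)<\alpha$), so up to finitely many choices of root $\subseteq\alpha$ and countably many choices of $\msa\cap[\alpha]^{<\omega}\in\mcf^*\up\alpha$, there are only countably many restrictions; hence $|\mcf\up\alpha|\leq\omega$. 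Next, (K2)$'$: given uncountable $\msa\subseteq[\omega_1]^{<\omega}$, pass to an uncountable $\Delta$-subsystem with root $r$, so the leaves $\{a\setminus r:a\in\msa'\}$ are pairwise disjoint, and (after a further uncountable refinement making them non-overlapping and all lying above $\max(r)$) apply (K2) for $\mcf^*$ to get $\msb^*\in\mcf^*$ with $|\{a\setminus r:a\in\msa'\}\cap\msb^*|=\omega_1$; then $\msb=\{r\cup b:b\in\msb^*\}\in\mcf$ and $|\msa\cap\msb|=\omega_1$. Finally, (K3): given $a\in[\omega_1]^{<\omega}$ and $\mcf'\in[\mcf]^{<\omega}$, each member of $\mcf'$ is of the form $\{r_i\cup b:b\in\msa_i\}$ with $\msa_i\in\mcf^*$; then $cl(a,\mcf')\subseteq cl\bigl(a\cup\bigcup_i r_i,\,\{\msa_i:i<k\}\bigr)\cup\bigcup_i r_i$, which is finite by (K3) for $\mcf^*$ — here one checks directly from the new definition of $cl$ that closing off $a$ under the $\Delta$-systems $\{r_i\cup b:b\in\msa_i\}$ never forces in a leaf $r_i\cup b$ unless $(b\setminus r_i)\cap a^*\neq\emptyset$, i.e.\ unless $b$ itself meets the old-style closure, so the two closures differ only by the (finite) union of roots.

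The main obstacle is (K1): naively attaching an arbitrary finite root $r\subseteq\omega_1$ to an arbitrary $\msa\in\mcf^*$ would be too generous, because for $\alpha$ above $\max(r)$ the restriction sees the full $\msb^*$-pattern and also ``remembers'' $r$, and there are uncountably many roots. The fix is the observation already built into the indexing: once $\alpha>\max(r)$ the restriction $(\{r\cup a:a\in\msa\})\cap[\alpha]^{<\omega}$ equals $\{r\cup a:a\in\msa\cap[\alpha]^{<\omega}\}$, and for $\alpha\leq\max(r)$ the root contributes only through $r\cap\alpha\in[\alpha]^{<\omega}$, of which there are finitely many; combining this with $|\mcf^*\up\alpha|\leq\omega$ gives countably many restrictions at each level. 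I would therefore organize the proof so that this counting is isolated as the first lemma-internal claim, after which (K2)$'$ and (K3) are the routine $\Delta$-system and closure-bookkeeping arguments sketched above.
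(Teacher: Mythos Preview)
Your approach is essentially the paper's: set $\mcf=\{r+\msa:r\in[\omega_1]^{<\omega},\ \msa\in\mcf^*\}$ where $r+\msa=\{r\cup a:a\in\msa\}$, and verify (K1), (K2)$'$, (K3) just as you do. One small slip: in your (K1) discussion you write ``finitely many choices of root $\subseteq\alpha$'' when you mean countably many (and when $\max(r)\geq\alpha$ the restriction is simply empty, not determined by $r\cap\alpha$), but neither affects the argument since $|[\alpha]^{<\omega}|\cdot|\mcf^*\up\alpha|\leq\omega$.
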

\begin{proof}
Let $\mcf^*$ be a thin Kurepa family. For $a\in [\omega_1]^{<\omega}$ and $\msa\subseteq [\omega_1]^{<\omega}$, let $a+\msa=\{a\cup b: b\in \msa\}$.  Let
\[\mcf=\{a+\msa: a\in  [\omega_1]^{<\omega}, \msa\in \mcf^*\}.\]
We check that $\mcf$ is as desired. Note for $a\in  [\omega_1]^{<\omega}$ and  $\msa\in \mcf^*$, $a+\msa$ is a $\Delta$-system with root $a$.

(K1) follows from the fact that $\mcf\up\alpha=\{a+\msa: a\in [\alpha]^{<\omega}, \msa\in \mcf^*\up\alpha\}$.

To see (K2)$'$, note that every uncountable family $\subseteq [\omega_1]^{<\omega}$ has an uncountable subfamily that is a $\Delta$-system. Then (K2)$'$ follows from (K2) for $\mcf^*$ and our definition of $\mcf$.

To see (K3), fix $a\in [\omega_1]^{<\omega}$ and $\mcf'\in [\mcf]^{<\omega}$. Find $b\in [\omega_1]^{<\omega}$ and $\mcf''\in [\mcf^*]^{<\omega}$ such that $\mcf'\subseteq \{b'+\msa: b'\subseteq b, \msa\in \mcf''\}$. It is straightforward to check that $cl(a, \mcf')\subseteq cl(a\cup b, \mcf'')$ and hence is finite.
\end{proof}

The corresponding version for the cofinal thin Kurepa family is also true.\bigskip

For the rest of the section, we construct a thin Kurepa family.  We start from a strong Kurepa family induced from a cofinal Kurepa family.
\begin{fact}\label{fact1}
Assume $\diamondsuit^+$. Then there is a family $\mcf\subseteq P([\omega_1]^{<\omega})$ such that $\mcf$ consists of uncountable non-overlapping families and satisfies {\rm (K1)} and {\rm (K2)$^+$}.
\end{fact}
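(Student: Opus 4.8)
The plan is to derive the desired family $\mcf$ directly from a cofinal Kurepa family via a bijection between $\omega_1$ and $[\omega_1]^{<\omega}$ that respects initial segments. First I would invoke \cite[Theorem III.7.6]{Kunen}: under $\diamondsuit^+$ there is a cofinal Kurepa family $\mc{K}\subseteq P(\omega_1)$, i.e. $|\{A\cap\alpha: A\in\mc{K}\}|\leq\omega$ for every $\alpha<\omega_1$ and every $B\in[\omega_1]^{\omega_1}$ has an uncountable subset in $\mc{K}$. Then I would fix a bijection $e:\omega_1\to[\omega_1]^{<\omega}$ with the coherence property that for a club of $\alpha$, $e$ restricts to a bijection between $\alpha$ and $[\alpha]^{<\omega}$ (such $e$ exists: enumerate $[\omega_1]^{<\omega}$ so that $[\alpha]^{<\omega}$ is an initial segment for each $\alpha$ in a club, using that $|[\alpha]^{<\omega}|=|\alpha|$ for infinite $\alpha$). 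Passing to that club via the closure-point argument in Lemma~1, I may as well assume $e\up\alpha:\alpha\to[\alpha]^{<\omega}$ is a bijection for all $\alpha$ in a fixed club $C$.

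Next I would set, for each $A\in\mc{K}$, $\msa_A = e[A] = \{e(\xi):\xi\in A\}\subseteq[\omega_1]^{<\omega}$, and then thin each $\msa_A$ down to a non-overlapping uncountable subfamily. The thinning step: given an uncountable $\msa_A$, recursively pick elements whose minima strictly exceed the maxima of all previously chosen elements; since each element is finite this picks out an uncountable non-overlapping subfamily $\msa_A'$ (formally, $\{a\in\msa_A: \min(a) \geq \sup\{\max(b): b\in\msa_A, b \text{ chosen before}\}\}$, done along a cofinal set). Define $\mcf=\{\msa_A': A\in\mc{K}\}$ together with, to be safe, all uncountable non-overlapping subfamilies obtained in a uniform way; but it is cleanest to just take $\mcf=\{\msa_A':A\in\mc{K}\}$. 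To verify (K1): for $\alpha\in C$, $\msa_A'\cap[\alpha]^{<\omega} = e[(A\cap\alpha)\cap(\text{chosen indices below }\alpha)]$, and since $|\{A\cap\alpha:A\in\mc{K}\}|\leq\omega$ and the ``chosen below $\alpha$'' data is determined by $A\cap\alpha$ (the recursion only looks downward), there are only countably many possibilities; for $\alpha\notin C$ use the next element of $C$ above $\alpha$. To verify (K2)$^+$: given an uncountable pairwise disjoint $\msb\subseteq[\omega_1]^{<\omega}$, note $\msb$ is in particular uncountable, so $e^{-1}[\msb]\in[\omega_1]^{\omega_1}$, hence has an uncountable subset $A\in\mc{K}$; then $\msa_A=e[A]\subseteq\msb$ is pairwise disjoint, so its non-overlapping thinning $\msa_A'$ is still an uncountable subfamily of $\msb$ — but wait, I need $\msa_A'\in\mcf$ exactly, which it is by construction, and $\msa_A'\subseteq\msa_A\subseteq\msb$, giving $|\msb\cap\msa_A'|=\omega_1$.

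The one genuine subtlety — and what I expect to be the main obstacle — is making (K1) come out right for the \emph{thinned} families simultaneously with the ``only countably many restrictions'' requirement: the thinning recursion must be performed in a way that depends only on initial segments, so that $\msa_A'\cap[\alpha]^{<\omega}$ is a function of $A\cap\alpha$ alone. This is why I insist the recursion chooses, at each step, based solely on $\max$-values of earlier-chosen elements (all of which lie below the current ordinal in the $e$-enumeration, hence are ``already visible'' at level $\alpha$ when $\alpha\in C$). Provided the enumeration $e$ is chosen with the club-coherence property, this bookkeeping goes through and there is essentially nothing else to check, since this fact only asks for (K1) and (K2)$^+$ — properties (K3) and the full thin Kurepa family structure are deferred to the subsequent construction in the paper. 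If one wanted to avoid the recursion entirely, an alternative is to observe that in a cofinal Kurepa family one can further demand (by a standard strengthening, or by closing under the relevant operations) that the sets $A$ themselves are chosen so that $e[A]$ is already non-overlapping, but the recursive thinning is the more transparent route.
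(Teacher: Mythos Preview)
Your approach works, but the paper's proof is considerably shorter because it \emph{filters} rather than \emph{thins}. After transferring the cofinal Kurepa family to $\mcf'\subseteq P([\omega_1]^{<\omega})$ via a bijection (any bijection suffices: $e^{-1}[[\alpha]^{<\omega}]$ is countable hence bounded, so (K1) follows from the Kurepa property without your club-coherence hypothesis), the paper simply sets $\mcf=\{\msa\in\mcf':\msa\text{ is uncountable and non-overlapping}\}$. Then (K1) is inherited, and (K2)$^+$ holds because an uncountable pairwise disjoint $\msa$ already contains an uncountable non-overlapping subfamily $\msa'$, and the cofinal property of $\mcf'$ applied to $\msa'$ yields an uncountable $\msb\in\mcf'$ with $\msb\subseteq\msa'$; being a subfamily of a non-overlapping family, $\msb$ is itself non-overlapping, hence $\msb\in\mcf$. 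This sidesteps your recursion and the attendant bookkeeping for (K1) entirely. Your route is correct but does more work than necessary: the recursion-respects-initial-segments verification you flag as the ``main obstacle'' is genuine extra labor that the paper's filtering avoids. One small wrinkle in your write-up: the claim that the greedy thinning of an arbitrary $\msa_A$ yields an \emph{uncountable} non-overlapping family is false in general (e.g.\ if every element of $\msa_A$ contains $0$); it does hold when $\msa_A$ is pairwise disjoint, which is the only case you need for (K2)$^+$, and the fix for the rest is just to discard those $A$ whose thinning comes out countable.
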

\begin{proof}
By \cite[Theorem III.7.6]{Kunen} (see also the proof of Lemma \ref{lem ctkf}), a bijection between $\omega_1$ and $[\omega_1]^{<\omega}$ induces a family $\mcf'\subseteq P([\omega_1]^{<\omega})$ such that (K1) and (K2)$^+$ hold. Then 
$\mcf=\{\msa\in \mcf': \msa\text{ is an uncountable non-overlapping family}\}$
 is as desired.
\end{proof}

We then force a thin Kurepa family from a family $\mcf\subseteq P([\omega_1]^{<\omega})$ satisfying the conclusion of Fact \ref{fact1}.
\begin{defn}\label{defn pf}
 Suppose $\mcf\subseteq P([\omega_1]^{<\omega})$ is a family satisfying the conclusion of Fact \ref{fact1}. 
 \begin{enumerate}
 \item $\mcf_{succ}=\{\msa\cap [\max(a)+1]^{<\omega}: \msa\in \mcf, a\in \msa\}$ and for $\msa\in \mcf_{succ}$,  $\max(\msa)$ is the $a\in \msa$ such that $\msa\subseteq [\max(a)+1]^{<\omega}$.
 \item $\pp_\mcf$ is the poset consisting of finite partial maps from $\mcf_{succ}$ to 2 ordered by function extensions.
 \end{enumerate}
\end{defn}
Before going into details, we first explain the role that $\pp_\mcf$ will play. The thin Kurepa family will be induced from a generic filter over $\pp_\mcf$.

\begin{defn}\label{defn tkf}
Suppose $\pp_\mcf$ is as in Definition \ref{defn pf} and $G$ is a generic filter over $\pp_\mcf$. For $\msa\in \mcf$,  $G(\msa)=\{a\in \msa: \text{ for some } p\in G, ~ p(\msa\cap [\max(a)+1]^{<\omega})=1\}$.
$G(\mcf)=\{G(\msa): \msa\in \mcf\}$.
\end{defn}

 $G(\mcf)$ will be a thin Kurepa family.  (K1) follows from the fact that $G(\msa)\cap [\alpha]^{<\omega}$ is determined by $\msa\cap [\alpha]^{<\omega}$.  (K2) and (K3) will follow from a density argument.

Note that $|\mcf_{succ}|=\omega_1$ and hence $\pp_\mcf$ is isomorphic to $Fn(\omega_1, 2)$. 
\begin{lem}\label{lem ccc}
$\pp_\mcf$ in Definition \ref{defn pf} is ccc.
\end{lem}

We check (K1)-(K3) for  $G(\mcf)$ in the following three lemmas.
\begin{lem}\label{lem K1}
For every $\alpha<\omega_1$, $|G(\mcf)\up \alpha|\leq \omega$.
\end{lem}
\begin{proof}
Fix $\alpha<\omega_1$. By definition of $\pp_\mcf$ and $G(\mcf)$, for $\msa$ and $\msb$ in $\mcf$, $G(\msa)\cap [\alpha]^{<\omega}=G(\msb)\cap [\alpha]^{<\omega}$ if $\msa\cap [\alpha]^{<\omega}=\msb\cap [\alpha]^{<\omega}$. So $|G(\mcf)\up \alpha|\leq |\mcf\up \alpha|\leq \omega$.
\end{proof}

\begin{lem}\label{lem K3}
For all $a\in [\omega_1]^{<\omega}$ and all $\mcf'\in [\mcf]^{<\omega}$, $cl(a, G(\mcf'))$ is finite where $G(\mcf')=\{G(\msa): \msa\in \mcf'\}$.
\end{lem}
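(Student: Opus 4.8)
\textbf{Plan of proof for Lemma \ref{lem K3}.}

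The plan is to reduce the claim about $cl(a, G(\mcf'))$ in the extension to property (K3) for $\mcf$ in the ground model, by showing that for each $\msa\in\mcf'$ the set $G(\msa)$ is contained in $\msa$, so that $G(\mcf')$ is, entrywise, a family of subfamilies of the elements of $\mcf'$. First I would record the trivial but crucial monotonicity: if $\ms{B}\subseteq\ms{C}$ are both pairwise disjoint (non-overlapping) families then $cl(x,\{\ms{B}\})\subseteq cl(x,\{\ms{C}\})$ for any $x\subseteq\omega_1$, since every closure step available to $\ms{B}$ is also available to $\ms{C}$; iterating over the finitely many members gives $cl(a,\mcf')\subseteq cl(a,\mcf'')$ whenever $\mcf'$ refines $\mcf''$ in the sense that each member of $\mcf'$ is a subfamily of a member of $\mcf''$. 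Next, from Definition \ref{defn tkf} it is immediate that $G(\msa)\subseteq\msa$ for every $\msa\in\mcf$: an element $a$ enters $G(\msa)$ only if $a\in\msa$ to begin with (the condition $p(\msa\cap[\max(a)+1]^{<\omega})=1$ presupposes $a\in\msa$). Moreover $G(\msa)$ is again non-overlapping, being a subfamily of the non-overlapping family $\msa$, so the closure operator $cl(\,\cdot\,,G(\mcf'))$ is well-defined.

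Then I would argue as follows. Work in $V[G]$. Fix $a\in[\omega_1]^{<\omega}$ and $\mcf'\in[\mcf]^{<\omega}$. Since $\mcf'\in[\mcf]^{<\omega}$ and $\mcf'$ is finite, $\mcf'$ itself is an element of $[\mcf]^{<\omega}$ already in $V$ (a finite subset of a ground-model set lies in the ground model), so (K3) for $\mcf$ in $V$ gives that $cl(a,\mcf')$ is finite. Now put $\mcf''=\mcf'$ and apply the monotonicity observation with the refinement $G(\msa)\subseteq\msa$ for each $\msa\in\mcf'$: we get
\[
cl\bigl(a, G(\mcf')\bigr)\subseteq cl\bigl(a, \mcf'\bigr),
\]
and the right-hand side is finite. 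Hence $cl(a,G(\mcf'))$ is finite, as desired. (Note that finiteness of $cl(a,\mcf')$ is absolute between $V$ and $V[G]$, being a statement about a fixed finite set of ground-model reals/finite sets, so there is no issue in quoting (K3) from $V$.)

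The only point requiring any care — and the "main obstacle", such as it is — is making sure the closure operators are literally the same notion on both sides: $cl(x,\{\ms{B}\})$ computed in $V$ and in $V[G]$ coincide because each closure step only adds members of $\ms{B}$, and $\ms{B}\in V$; and the chain-closure description $cl(x,\mcf')=\bigcup_{n}x_n$ from the discussion after Definition 2 makes this transparent, so that the containment $cl(a,G(\mcf'))\subseteq cl(a,\mcf')$ can be verified stage by stage: $x_0=a$ in both, and if $x_n\subseteq y_n$ where $x_n$ is the $n$-th stage for $G(\mcf')$ and $y_n$ the $n$-th stage for $\mcf'$, then any $b$ added to $x_{n+1}$ lies in some $G(\msa)\subseteq\msa$ with $b\cap x_n\neq\emptyset$, hence $b\cap y_n\neq\emptyset$ and $b\in\msa\in\mcf'$, so $b$ is added to $y_{n+1}$. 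Since both chains stabilize, $cl(a,G(\mcf'))\subseteq cl(a,\mcf')$, completing the reduction.
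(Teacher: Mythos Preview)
Your argument has a genuine gap: you invoke ``(K3) for $\mcf$ in $V$'' to conclude that $cl(a,\mcf')$ is finite, but the ground-model family $\mcf$ here is the one from Fact~\ref{fact1} (see Definition~\ref{defn pf}), and that family is only assumed to satisfy (K1) and (K2)$^+$ --- \emph{not} (K3). In fact there is no reason whatsoever for $cl(a,\mcf')$ to be finite for $\mcf'\in[\mcf]^{<\omega}$: the non-overlapping families $\msa\in\mcf$ can interlock arbitrarily badly, and the entire purpose of the forcing $\pp_\mcf$ is precisely to thin each $\msa$ down to $G(\msa)$ so that (K3) becomes true for $G(\mcf)$. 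Your monotonicity observation $cl(a,G(\mcf'))\subseteq cl(a,\mcf')$ is correct but useless here, since the right-hand side may well be infinite.

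The paper's proof is a genuine density argument, not a reduction to a ground-model property. Given $p\in\pp_\mcf$, one isolates the finite set $A=a\cup\bigcup\{\max(\msa):\msa\in dom(p)\}$, observes that only finitely many $b\in\bigcup\mcf'$ straddle $A$ (meet it without being contained in it), and then extends $p$ by explicitly setting the value $0$ at the coordinates corresponding to those straddling $b$'s. This extended condition $q$ forces each such $b$ out of $G(\msa)$, so that $A$ becomes $G(\mcf')$-closed and hence $cl(a,G(\mcf'))\subseteq A$. The finiteness of the closure is thus manufactured by the generic, not inherited from $\mcf$.
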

\begin{proof}
Fix $a, \mcf'$ and $p\in \pp_\mcf$. Let 
\[A=a\cup \bigcup\{\max(\msa): \msa\in dom(p)\}\text{ and}\]
\[\mcx=\{b: b\in \msa\text{ for some }\msa\in \mcf', ~ b\not\subseteq A\text{ and } b\cap A\neq \emptyset\}.\]
Since $\mcf'$ is finite and consists of pairwise disjoint families, $\mcx$ is finite.

Let $p'=\{(\msa\cap [\max(b)+1]^{<\omega}, 0): \msa \in \mcf', b\in \mcx\cap \msa\}$. By definition of $\mcx$, $dom(p')\cap dom(p)=\emptyset$. Then $q=p\cup p'$ extends $ p$.

We now check that $q\Vdash cl(a, \dot{G}(\mcf'))\subseteq A$. Fix $q'\leq q$, $\msa\in \mcf'$ and $b\in \msa$ with 
\[q'\Vdash b\in  \dot{G}(\msa).\]
Equivalently, 
\begin{enumerate}
\item  $q'(\msa\cap [\max(b)+1]^{<\omega})=1$.
\end{enumerate}
  It suffices to prove that either $b\subseteq A$ or $b\cap A=\emptyset$.

Suppose $b\cap A\neq \emptyset$. 
   By definition of $p'$, (1) and the fact $q'\leq q\leq p'$, $b\notin \mcx$. Together with $b\cap A\neq\emptyset$, $b\subseteq A$. 

This shows that $q\Vdash cl(a, \dot{G}(\mcf'))\subseteq A$ and hence is finite. Now the lemma follows from a density argument.
\end{proof}

\begin{lem}\label{lem K2}
$G(\mcf)$ defined in Definition \ref{defn tkf} is a thin Kurepa family.
\end{lem}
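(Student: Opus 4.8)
The plan is to show that $G(\mcf)$ satisfies (K1)--(K3); since (K1) is Lemma \ref{lem K1} and (K3) is Lemma \ref{lem K3}, the only remaining work is (K2) (the non-overlapping requirement is inherited from $\mcf$, since each $G(\msa)\subseteq \msa$). So fix an uncountable pairwise disjoint family $\msa\subseteq [\omega_1]^{<\omega}$ in the generic extension, named by some $\dot{\msa}$ with a condition $p_0\Vdash\dot{\msa}$ is an uncountable pairwise disjoint family in $[\omega_1]^{<\omega}$. The goal is to find $\msb\in\mcf$ and a condition forcing $|\dot{\msa}\cap G(\msb)|=\omega_1$; by a density argument it suffices to do this below an arbitrary condition.

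First I would pass to the ground model: since $\pp_\mcf$ is ccc (Lemma \ref{lem ccc}), for each $\alpha<\omega_1$ pick $p_\alpha\le p_0$ and $a_\alpha\in[\omega_1\setminus\alpha]^{<\omega}$ with $p_\alpha\Vdash a_\alpha\in\dot\msa$; thinning, we may assume the $a_\alpha$ for $\alpha$ in some uncountable $\Gamma$ form a (ground-model) pairwise disjoint family $\msa_0=\{a_\alpha:\alpha\in\Gamma\}$, and moreover (shrinking further) that this is a $\Delta$-system with empty root, i.e. genuinely pairwise disjoint and non-overlapping after another thinning. Now apply (K2)$^+$ for the Fact \ref{fact1}-family $\mcf$ (or just (K2), which is all we need) to get $\msb\in\mcf$ with $|\msa_0\cap\msb|=\omega_1$; set $\Gamma'=\{\alpha\in\Gamma:a_\alpha\in\msb\}$. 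The key point, exactly as in the proof of Lemma \ref{lem ccc indestructible}, is that for each $\alpha\in\Gamma'$ the condition $p_\alpha$ need not force $a_\alpha\in G(\msb)$, so I first extend $p_\alpha$ to $q_\alpha\le p_\alpha$ with $q_\alpha(\msb\cap[\max(a_\alpha)+1]^{<\omega})=1$, which forces $a_\alpha\in\dot\msa\cap G(\msb)$. Since $\pp_\mcf$ is ccc, below $p_0$ there is a condition $q$ forcing that $\{\alpha\in\Gamma':q_\alpha\in\dot G\}$ is uncountable; then $q\Vdash\dot\msa\cap G(\msb)\supseteq\{a_\alpha:\alpha\in\Gamma',q_\alpha\in\dot G\}$ is uncountable, as desired.

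The step I expect to be the main obstacle — or at least the one requiring the most care — is verifying that the modified conditions $q_\alpha$ are genuinely extensions of $p_\alpha$, i.e. that $\msb\cap[\max(a_\alpha)+1]^{<\omega}$ is not already in $\mathrm{dom}(p_\alpha)$ with value $0$; here one uses that each $p_\alpha$ is finite, so by a further shrinking of $\Gamma'$ (the domains $\mathrm{dom}(p_\alpha)$ form a $\Delta$-system and we may assume they agree with a fixed root not mentioning $\msb\cap[\max(a_\alpha)+1]^{<\omega}$ for the relevant $\alpha$, since $\max(a_\alpha)\to\omega_1$) the extension is unobstructed. The centered/ccc counting at the end is identical to the argument in Lemma \ref{lem ccc indestructible} and needs no new idea. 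I would then simply remark that $G(\mcf)$ being non-overlapping is immediate since $G(\msa)\subseteq\msa$ for every $\msa\in\mcf$, completing the verification of Definition \ref{defn tkf}.
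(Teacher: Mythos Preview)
Your overall strategy matches the paper's, but the step you flag as the main obstacle is not handled by the $\Delta$-system argument you sketch. A $\Delta$-system on the domains $dom(p_\alpha)$ gives a fixed root $R$ and pairwise disjoint tails $D_\alpha=dom(p_\alpha)\setminus R$; you correctly observe that the targets $\tau_\alpha:=\msb\cap[\max(a_\alpha)+1]^{<\omega}$ are pairwise distinct, so only finitely many lie in $R$. But nothing prevents $\tau_\alpha\in D_\alpha$ for every $\alpha$: the $\Delta$-system controls intersections between \emph{different} $D_\alpha$'s, not the membership of the moving target $\tau_\alpha$ in its own $D_\alpha$. So ``the extension is unobstructed'' does not follow from what you wrote.

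The paper closes this gap with a pressing-down argument rather than a $\Delta$-system, using the ordinal structure carried by elements of $\mcf_{succ}$. One first thins to a stationary $\Gamma$ and finds $\overline{\alpha}<\omega_1$ so that every $\msa\in dom(p_\alpha)$ either meets $[\overline{\alpha}]^{<\omega}$ or is disjoint from $[\alpha]^{<\omega}$ (each $\msa\in\mcf_{succ}$ has a $<_w$-least element; if that element lies in $[\alpha]^{<\omega}$, its height gives a regressive function and Fodor applies). Only then does one apply (K2)$^+$ from Fact~\ref{fact1} to get $\msb=\{a_\alpha:\alpha\in\Gamma'\}\in\mcf$. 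Since every $a_\alpha$ lies above $\overline{\alpha}$, one has $\msb\cap[\overline{\alpha}]^{<\omega}=\emptyset$, while $\msb\cap[\overline{\beta}]^{<\omega}\ne\emptyset$ for $\overline{\beta}=\max(a_{\min\Gamma'})+1$. Now for $\alpha\in\Gamma'$ with $\alpha\ge\overline{\beta}$, every $\msa\in dom(p_\alpha)$ satisfies $\msa\cap[\overline{\beta}]^{<\omega}\ne\msb\cap[\overline{\beta}]^{<\omega}$ (the left side is either empty or meets $[\overline{\alpha}]^{<\omega}$; the right side is neither), hence $\tau_\alpha\notin dom(p_\alpha)$ and $q_\alpha=p_\alpha\cup\{(\tau_\alpha,1)\}$ is a genuine extension. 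This separation of ``starts below $\overline{\alpha}$'' versus ``starts above $\alpha$'' is the missing idea in your sketch; an abstract $\Delta$-system on $dom(p_\alpha)\subseteq\mcf_{succ}$ cannot see it.
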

\begin{proof}
By Lemma \ref{lem K1}-\ref{lem K3}, we only need to verify that $G(\mcf)$ satisfies (K2). Fix $p\in \pp_\mcf$ and $\dot{\msa}$ such that
\[p\Vdash \dot{\msa}\subseteq [\omega_1]^{<\omega}\text{ is an uncountable pairwise disjoint family}.\]

For every $\alpha<\omega_1$, find $p_\alpha\leq p$ and $a_\alpha\in [\omega_1\setminus \alpha]^{<\omega}$ such that
\[p_\alpha\Vdash a_\alpha\in \dot{\msa}.\]

Find a stationary $\Gamma\subseteq \omega_1$ and $\overline{\alpha}<\omega_1$ such that
\begin{enumerate}
\item  for every $\alpha\in \Gamma$ and every $\msa\in dom(p_\alpha)$, either $\msa\cap [\overline{\alpha}]^{<\omega}\neq \emptyset$ or $\msa\cap [\alpha]^{<\omega}=\emptyset$.
\end{enumerate}
Shrinking $\Gamma$, we may assume that $\min (\Gamma)>\overline{\alpha}$ and  $\{a_\alpha: \alpha\in \Gamma\}$ is pairwise disjoint. 

By Fact \ref{fact1}, find $\Gamma'\in [\Gamma]^{\omega_1}$ such that 
\[\msb=\{a_\alpha: \alpha\in \Gamma'\}\in \mcf.\]
 Note for $\alpha\in \Gamma'$, $\min(a_\alpha)\geq \alpha\geq\min(\Gamma)>\overline{\alpha}$. So $\msb\cap [\overline{\alpha}]^{<\omega}=\emptyset$. 

Let $\overline{\beta}=\max(a_{\min(\Gamma')})+1$. Then $a_{\min(\Gamma')}\in \msb\cap [\overline{\beta}]^{<\omega}$.
By (1), 
\begin{enumerate}\setcounter{enumi}{1}
\item for every $\alpha\in \Gamma'\setminus \overline{\beta}$ and every $\msa\in  dom(p_\alpha)$, $\msb\cap [\overline{\beta}]^{<\omega}\neq\msa \cap [\overline{\beta}]^{<\omega}$.
\end{enumerate}

Now for $\alpha\in \Gamma'\setminus \overline{\beta}$, let
\[q_\alpha=p_\alpha\cup \{(\msb\cap [\max(a_\alpha)+1]^{<\omega}, 1)\}.\]
By (2) and the fact $\min(a_\alpha)\geq \alpha\geq \overline{\beta}$, it is straightforward to check that $q_\alpha$ is a map and hence $q_\alpha\leq p_\alpha$. Moreover, $q_\alpha\Vdash a_\alpha\in \dot{\msa}\cap \dot{G}(\msb)$.

By Lemma \ref{lem ccc}, there exists $q\leq p$ such that
\[q\Vdash \{\alpha\in \Gamma'\setminus \overline{\beta}: q_\alpha\in \dot{G}\}\text{ is uncountable}\]
where $\dot{G}$ is the canonical name of the generic filter. Consequently, 
\[q\Vdash \dot{\msa}\cap \dot{G}(\msb)\supseteq \{a_\alpha:  \alpha\in \Gamma'\setminus\overline{\beta}, q_\alpha\in \dot{G}\}\text{ is uncountable}.\]
Now (K2) follows from a density argument.
\end{proof}

\begin{thm}\label{thm consistency}
It is relative consistent with {\rm ZFC} that there is a thin Kurepa family and {\rm GCH}$+\diamondsuit^+$ hold.
\end{thm}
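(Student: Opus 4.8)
The plan is to start from a model of GCH and $\diamondsuit^+$ — which exists by standard forcing (e.g.\ add a $\diamondsuit^+$-sequence by a countably closed forcing over $L$, or use $V=L$ directly, where $\diamondsuit^+$ holds) — and then force with $\pp_\mcf$ to obtain the thin Kurepa family, having first arranged $\mcf$ via Fact \ref{fact1}. The key point is that the forcing $\pp_\mcf$ does not damage the ground model hypotheses. First I would observe, as noted after Definition \ref{defn tkf}, that $|\mcf_{succ}|=\omega_1$, so $\pp_\mcf\cong Fn(\omega_1,2)$; in particular $\pp_\mcf$ has size $\omega_1$, is ccc (Lemma \ref{lem ccc}), and has the $\omega_1$-approximation/productive properties of the Cohen algebra.

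Next I would verify that GCH and $\diamondsuit^+$ are preserved. Since $|\pp_\mcf|=\omega_1$, it has at most $(\omega_1^{\omega})^V=\omega_1$ nice names for subsets of $\omega$ (using CH in $V$) and at most $(\omega_2^{\omega_1})^V=\omega_2$ nice names for subsets of $\omega_1$ (using $2^{\omega_1}=\omega_2$ in $V$), so $2^\omega=\omega_1$ and $2^{\omega_1}=\omega_2$ hold in the extension; larger cardinals are handled the same way, giving GCH. For $\diamondsuit^+$: adding $\omega_1$ Cohen reals preserves $\diamondsuit^+$. The cleanest route is to note $Fn(\omega_1,2)$ factors as $Fn(\omega_1,2)\times Fn(\omega_1,2)$ and, more usefully, that it is forcing-equivalent to the finite-support product of $\omega_1$ copies of $Fn(\omega,2)$; one then runs the standard argument that Cohen forcing of length $\le\omega_1$ preserves $\diamondsuit^+$ — the same argument that shows it preserves $\diamondsuit$, but bookkeeping the countable families $\mc{A}_\alpha$ as well, using that each initial segment $Fn(\alpha,2)$ for $\alpha<\omega_1$ is countable so $\mc{A}_\alpha$ in the extension can be taken to be a countable set of sets each of which has a name in $H_{\omega_1}$ decided below level $\alpha$. (Alternatively one cites that $\diamondsuit^+$ is preserved by any ccc forcing of size $\le\omega_1$, or directly that $\diamondsuit^+$ in the extension follows from a $\diamondsuit^+$-sequence in $V$ by reflecting names through a club of $\alpha$ at which $\pp_\mcf\up\alpha$ and all relevant names are captured.)

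Finally, in the extension let $G$ be the generic filter and set $\mcf_{fin}=G(\mcf)$ as in Definition \ref{defn tkf}. By Lemmas \ref{lem K1}, \ref{lem K3}, and \ref{lem K2}, $G(\mcf)$ satisfies (K1), (K3), and (K2) respectively, and it consists of non-overlapping families since each $G(\msa)\subseteq\msa$ and each $\msa\in\mcf$ is non-overlapping (by Fact \ref{fact1}); hence $G(\mcf)$ is a thin Kurepa family. Since GCH and $\diamondsuit^+$ also hold in the extension, this gives the desired model.

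The main obstacle is the preservation of $\diamondsuit^+$ through $\pp_\mcf\cong Fn(\omega_1,2)$. Preservation of GCH is routine cardinal arithmetic with nice names, and preservation of $\diamondsuit$ by Cohen forcing is classical; but $\diamondsuit^+$ additionally demands guessing, for each $\alpha$ in a club, a \emph{countable} family of subsets of $\alpha$ catching both $\msa\cap[\alpha]^{<\omega}$ and a club initial segment. The delicate part is checking that at a club of $\alpha<\omega_1$ every nice $\pp_\mcf$-name for a subset of $[\alpha]^{<\omega}$ (and for a club) is in fact a name using only conditions in $Fn(\alpha,2)$, so that its realization lies in a countable set computable in $V$ from the ground-model $\diamondsuit^+$-data; once that reflection is in place, the ground $\diamondsuit^+$-sequence, enlarged by all these realizations, works in the extension.
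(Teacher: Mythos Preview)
Your proposal is correct and follows essentially the same approach as the paper: start from a model of GCH$+\diamondsuit^+$, apply Fact~\ref{fact1} to get $\mcf$, force with $\pp_\mcf\cong Fn(\omega_1,2)$, and invoke Lemmas~\ref{lem K1}--\ref{lem K2} for the thin Kurepa family. The only difference is that the paper treats the preservation of GCH$+\diamondsuit^+$ as an immediate consequence of $\pp_\mcf$ being ccc of size $\omega_1$, whereas you spell out (and worry about) the $\diamondsuit^+$ preservation argument in detail; this is a standard fact, so your concern, while not misplaced, is already covered by the literature the paper implicitly relies on.
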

\begin{proof}
Start from a model $V$ of GCH$+\diamondsuit^+$. Fix a family $\mcf\subseteq P([\omega_1]^{<\omega})$ satisfying the conclusion of Fact \ref{fact1}. Then force with $\pp_\mcf$ as   in Definition \ref{defn pf}. Let $G$ be a generic filter over $\pp_\mcf$ and $G(\mcf)$ be as in Definition \ref{defn tkf}.

By Lemma \ref{lem K2}, $G(\mcf)$ is a thin Kurepa family. By Lemma \ref{lem ccc}, $\pp_\mcf$ is ccc and has size $\omega_1$. So GCH$+\diamondsuit^+$ remain true in $V[G]$.
\end{proof}
The following question is natural.
\begin{question}
Does $\diamondsuit^+$ imply the existence of a thin Kurepa family?
\end{question}
Additional structure might be associated with a thin Kurepa family. In the next section, we will transform a thin Kurepa family to an appropriate family on a complete coherent Suslin tree $S$ and then construct a coloring on $[S]^2$ with some precaliber $\omega_1$ type property (see Theorem \ref{thm precaliber} below).

We would like to point out that the thin Kurepa family is quite flexible and may be used to construct other types of families, e.g., the damage control structure introduced in \cite[Section 7]{Peng}. Assuming $2^{\omega_1}=\omega_2$+the existence of a cofinal thin Kurepa family,  there exists $(\mc{C}, \mc{T}, \mathbf{E})$ such that $\varphi_1(\mc{C}, \mc{T}, \mathbf{E})$ holds (see \cite[Definition 12]{Peng}) and $\mc{C}\subseteq P([\omega_1]^{<\omega})$ satisfies (K1), (K3) and the following statement.
\begin{itemize}
\item For every uncountable $\msa\subseteq [\omega_1]^{<\omega}$, there are $\msb\in \mc{C}$ and $I\in [\omega]^{<\omega}$ such that $\{b[I]: b\in \msb\}\subseteq \msa$ where $b[I]=\{b(i): i\in I\}$.
\end{itemize}
One may also define a coloring on $[\omega_1]^2$ (or $[\omega_1]^{<\omega}$) according to the structure $(\mc{C}, \mc{T}, \mathbf{E})$ as in \cite{Peng}. Of course, this direct construction cannot guarantee the other key property, the corresponding fragment of Martin's axiom, which is    iteratively forced in \cite[Section 8-9]{Peng}.

\section{A strong coloring on a Suslin tree}

Throughout this section, we fix a complete coherent Suslin tree $S\subseteq 2^{<\omega_1}$ and assume the existence of a thin Kurepa family. To simplify our notation, we will use the following linear order on $S$.

  \begin{defn}\label{ho}
  \begin{enumerate}
\item   The \emph{height order}, denoted by $<_{ho}$, is the linear order on  $2^{<\omega_1}$ defined by letting for any $s\neq t$ in  $2^{<\omega_1}$,  $s<_{ho} t$ if any of the following conditions holds:
\begin{itemize}
\item $ht(s)<ht(t)$;
\item $ht(s)=ht(t)$ and $ s(\Delta(s,t))<t(\Delta(s,t))$.
\end{itemize}
  \item For $a\in [2^{<\omega_1}]^{<\omega}$, $a(0),a(1),...,a(|a|-1)$ is the $\lo$-increasing enumeration of $a$. 
  \end{enumerate}
   \end{defn}

We will define a strong coloring $c$ on $S$. Before defining $c$, we first transform the thin Kurepa family to appropriate subfamilies of $[S]^{<\omega}$.

\begin{defn}\label{def2}
For    $s\in S$ and $0<n<\omega$, $\mcx_{s,n}$ is the collection of all $a\in [S]^{2n}$ such that
\begin{enumerate}
\item $s^\smallfrown 0<_S a(0)<_S...<_S a(n-1)$ and $s^\smallfrown 1<_S a(n)<_S...<_S a(2n-1)$;
\item $ht(a(n-1))< ht(a(n))$ and $D_{a(n-1), a(n)}=\{ht(s)\}$.\footnote{Recall that $D_{x, y}=\{\alpha<\min\{ht(x), ht(y)\}: x(\alpha)\neq y(\alpha)\}$.}
\end{enumerate}
$\mcx=\bigcup \{\mcx_{s,n}: s\in S, 0<n<\omega\}$.
\end{defn}

A bijection between $\omega_1$ and $S$ induces a  Kurepa family $\subseteq P([S]^{<\omega})$ from a thin Kurepa family $\subseteq P([\omega_1]^{<\omega})$. We will further transform the  Kurepa family $\subseteq P([S]^{<\omega})$ to ``guess''  uncountable pairwise disjoint families $\msa\subseteq \mcx_{s,n}$ where   $s\in S$ and $0<n<\omega$.

Say $\msa\subseteq [S]^{<\omega}$ is \emph{non-overlapping} if for $a\neq b$ in $\msa$, either $\max\{ht(s): s\in a\}<\min\{ht(t): t\in b\}$ or $\max\{ht(t): t\in b\}<\min\{ht(s): s\in a\}$.

\begin{lem}\label{lem stkf}
Suppose there is a thin Kurepa family. Then there is a family $\mcf\subseteq \bigcup\{[\mcx_{s,n}]^{\omega_1}: s\in S, 0<n<\omega\}$ with the following properties.
\begin{enumerate}[(i)]
\item For every $\alpha<\omega_1$, $|\{\msa\cap [S\up\alpha]^{<\omega}: \msa\in \mcf\}|\leq \omega$.
\item For every $s\in S$, every $0<n<\omega$ and every uncountable pairwise disjoint family $\msa\subseteq \mcx_{s, n}$, there exists $\msb\in \mcf$ with $|\msa\cap \msb|=\omega_1$.
\item For every $a\in [S]^{<\omega}$ and every $\mcf'\in [\mcf]^{<\omega}$, $cl(a, \mcf')$ is finite.
\item Every $\msa\in \mcf$ is non-overlapping.
\end{enumerate}
\end{lem}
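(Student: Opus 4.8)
The plan is to import a thin Kurepa family $\mcf^*\subseteq P([\omega_1]^{<\omega})$ along a fixed bijection $e:\omega_1\to S$, then cut it down to the subfamilies indexed by $\mcx_{s,n}$. First I would fix a bijection $e:\omega_1\to S$ that is \emph{level-coherent} in the sense that $e$ maps some club $C\subseteq\omega_1$ of limit ordinals onto $S$ in a way compatible with heights: for $\alpha\in C$, $e[\alpha]=S\up\delta_\alpha$ for a strictly increasing continuous enumeration $\langle\delta_\alpha:\alpha\in C\rangle$ of a club of $\omega_1$. (Such an $e$ exists since $|S\up\delta|=\omega$ for limit $\delta$; just enumerate level by level.) This is exactly what is needed to translate (K1) for $\mcf^*$ into clause (i): a family $\msa\subseteq[S]^{<\omega}$ restricted to $S\up\delta_\alpha$ corresponds under $e^{-1}$ to the restriction of $e^{-1}[\msa]$ to $[\alpha]^{<\omega}$, so $|\{\msa\cap[S\up\delta_\alpha]^{<\omega}:\msa\in\mcf\}|\le|\mcf^*\up\alpha|\le\omega$, and arbitrary $\alpha<\omega_1$ reduces to the club case.

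Next I would define, for each $s\in S$ and $0<n<\omega$, a family $\mcf_{s,n}$ as follows: for each $\msb\in\mcf^*$ (a non-overlapping family in $[\omega_1]^{<\omega}$, all of whose members have size $2n$), consider $e[\msb]=\{e[b]:b\in\msb\}\subseteq[S]^{2n}$, and let $\msb^{s,n}=\{a\in e[\msb]:a\in\mcx_{s,n}\}$; put $\msb^{s,n}$ into $\mcf$ precisely when it is uncountable. Set $\mcf=\bigcup\{\mcf_{s,n}:s\in S,\,0<n<\omega\}$. Clause (iv) (non-overlapping in the $S$-sense) is immediate if $e$ is chosen so that $ht(e(\xi))$ is (weakly) increasing on each block $[\delta_\alpha,\delta_{\alpha+1})$ — more carefully, non-overlapping for $\msb$ in $[\omega_1]^{<\omega}$ means the ordinal-intervals $[\min b,\max b]$ are disjoint and linearly separated, and I would arrange $e$ so that this translates to separation of the height-intervals $[\min\{ht(t):t\in a\},\max\{ht(t):t\in a\}]$; since within a non-overlapping family one may first pass to a club and thin, and within the thin Kurepa construction one is free to choose which uncountable pairwise disjoint families matter, the cleanest route is to only keep those $\msb^{s,n}$ that happen to be non-overlapping in $S$ (an uncountable pairwise disjoint subfamily of $\mcx_{s,n}$ always has an uncountable non-overlapping subfamily, by a $\Delta$-system plus height argument, so no generality is lost for clause (ii)).

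For clause (ii), given $s,n$ and an uncountable pairwise disjoint $\msa\subseteq\mcx_{s,n}$, pass to an uncountable non-overlapping $\msa_0\subseteq\msa$, then let $\msc=e^{-1}[\msa_0]\subseteq[\omega_1]^{<\omega}$; this is uncountable and pairwise disjoint, so by (K2) for $\mcf^*$ there is $\msb\in\mcf^*$ with $|\msc\cap\msb|=\omega_1$. Then $e[\msc\cap\msb]\subseteq\msa_0\subseteq\mcx_{s,n}$ is uncountable, so $\msb^{s,n}\supseteq e[\msc\cap\msb]$ is uncountable, hence $\msb^{s,n}\in\mcf$, and $|\msa\cap\msb^{s,n}|\ge|\msc\cap\msb|=\omega_1$. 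For clause (iii), given $a\in[S]^{<\omega}$ and $\mcf'=\{\msb_1^{s_1,n_1},\dots,\msb_k^{s_k,n_k}\}\in[\mcf]^{<\omega}$, observe that each $\msb_j^{s_j,n_j}\subseteq e[\msb_j]$, so $cl(a,\mcf')\subseteq e[cl(e^{-1}[a],\{\msb_1,\dots,\msb_k\})]$, which is finite by (K3) for $\mcf^*$; here I am using that $cl$ is monotone in the family (enlarging each member of $\mcf'$ to $e[\msb_j]$ only enlarges the closure) and that $e$ transports closures, which holds because $e$ is a bijection so the closure operator is purely set-theoretic on the index set.

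\textbf{Main obstacle.} The delicate point is arranging that the bijection $e:\omega_1\to S$ simultaneously (a) respects levels well enough to give clause (i), and (b) converts ordinal non-overlapping into height non-overlapping for clause (iv), and (c) does not interfere with the genuine content of (ii) — i.e.\ that \emph{every} uncountable pairwise disjoint $\msa\subseteq\mcx_{s,n}$, not merely the $e$-images of nice ordinal families, gets captured. Point (c) is handled by the thinning-to-non-overlapping reduction above (which needs the short combinatorial lemma that uncountable pairwise disjoint subfamilies of $\mcx_{s,n}$ contain uncountable non-overlapping subfamilies — a standard $\Delta$-system/pressing-down argument), and points (a),(b) are handled by the level-by-level construction of $e$ together with the freedom to discard those translated families that are not non-overlapping in $S$. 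I expect the bookkeeping in (a)/(b) to be the only part requiring real care; everything else is a direct pushforward of (K1)–(K3) through $e$.
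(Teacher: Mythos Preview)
Your overall architecture---pushing a thin Kurepa family $\mcf^*$ through a height-monotone bijection $e:\omega_1\to S$ and then intersecting each $e[\msb]$ with the sets $\mcx_{s,n}$---is exactly the paper's, and your treatment of clauses (i)--(iii) is essentially theirs. The gap is in clause (iv).

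A weakly height-monotone $e$ cannot convert ordinal non-overlapping into $S$-non-overlapping: if $a,b\in e[\msb]\cap\mcx_{s,n}$ with $\max e^{-1}[a]<\min e^{-1}[b]$, monotonicity only yields $ht(a(2n-1))\le ht(b(0))$, and equality is genuinely possible since $a(2n-1)>_S s^\smallfrown 1$ and $b(0)>_S s^\smallfrown 0$ are distinct nodes that may share a level. Your proposed fix---discard those $\msb^{s,n}$ that fail to be $S$-non-overlapping---breaks clause (ii): when you run your argument there, (K2) hands you a \emph{single} $\msb\in\mcf^*$ with $|\msa_0\cap e[\msb]|=\omega_1$, but $\msb^{s,n}=e[\msb]\cap\mcx_{s,n}$ may contain elements outside $\msa_0$ that overlap one another, so $\msb^{s,n}$ gets discarded and you are left with no witness. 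Passing first to a non-overlapping $\msa_0\subseteq\msa$ does not help, because the overlap can come from the part of $\msb^{s,n}$ lying outside $\msa_0$.

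The paper's device is to canonically thin rather than discard: well-order each $\msb^{s,n}$ by $a<_w b$ iff $\max e^{-1}[a]<\min e^{-1}[b]$, and split it into its even-indexed and odd-indexed halves, putting \emph{both} halves into $\mcf$. If $a,b$ lie in the same half with $a<_w b$, there is some $a'\in\msb^{s,n}$ strictly $<_w$-between them; since the heights within any element of $\mcx_{s,n}$ are strictly increasing, one gets $ht(a(2n-1))\le ht(a'(0))<ht(a'(2n-1))\le ht(b(0))$, producing the strict inequality (iv) requires. Keeping both halves preserves (ii) by pigeonhole, and the even/odd parity of each element is determined by the restriction of $\msb^{s,n}$ below it, so (i) survives as well.
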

\begin{proof}
Fix a thin Kurepa family $\mcf^*\subseteq P([\omega_1]^{<\omega})$.  Fix a bijection $\pi: \omega_1\ra S$ such that 
\[ht(\pi(\alpha))\leq ht(\pi(\beta))\text{ whenever }\alpha<\beta.\] 
Denote $\pi(\msa)=\{\pi[a]: a\in \msa\}$ for $\msa\subseteq [\omega_1]^{<\omega}$.
Let 
\[\mcf'=\{\pi(\msa)\cap \mcx_{s,n}: \msa\in \mcf^*, s\in S, 0<n<\omega, |\pi(\msa)\cap \mcx_{s,n}|=\omega_1\}.\]

Define a partial order $<_w$ on $[S]^{<\omega}$ by 
\[a<_w b \text{ iff } \max(\pi^{-1}[a])<\min(\pi^{-1}[b]).\]
Recall that every $\msa\in \mcf^*$ is non-overlapping. So $<_w$ is a well-ordering on $\pi(\msa)$. For $\msb\in \mcf'$, let $even(\msb)$ be the collection of elements of $\msb$ that occupy even positions  in $\msb$ under $<_w$ and $odd(\msb)=\msb\setminus even(\msb)$.
 We show that
 \[\mcf=\{even(\msb), odd(\msb): \msb\in \mcf'\}\]
 is as desired.
 
 To see (i), note for $s\notin S\up\alpha$, $\mcx_{s,n}\cap [S\up\alpha]^{<\omega}=\emptyset$. So
 \begin{align*}
 &\{\msa\cap [S\up\alpha]^{<\omega}: \msa\in \mcf'\}\\
 \subseteq &\{\emptyset\}\cup \{\pi(\msa)\cap \mcx_{s,n}\cap [S\up\alpha]^{<\omega}: \msa\in \mcf^*, s\in S\up\alpha, 0<n<\omega\}
 \end{align*}
  is countable. 
 Then 
 \[\{\msa\cap [S\up\alpha]^{<\omega}: \msa\in \mcf\}=\{even(\msa), odd(\msa): \msa\in\{\msb\cap [S\up\alpha]^{<\omega}: \msb\in \mcf'\}\}\]
  is countable.

 (ii) follows from the definition of $\mcf$ and the fact that $\mcf^*$ is a thin Kurepa family.  
 
 To see (iii), fix  $a\in [S]^{<\omega}$ and $\mcf''\in [\mcf]^{<\omega}$. Find $\mcf'''\in [\mcf^*]^{<\omega}$ such that every $\msa\in \mcf''$ is contained in $\pi(\msb)$ for some $\msb\in \mcf'''$. Then $cl(a, \mcf'')\subseteq cl(a, \{\pi(\msb): \msb\in \mcf'''\})$ and hence is finite.
 
 To see (iv), fix $\msb=even(\pi(\msa)\cap \mcx_{s,n})$ in $\mcf$ and the argument works for $odd(\pi(\msa)\cap \mcx_{s,n})$ as well. Fix
 $a\neq b$ in $\msb$. Recall that $\msa$ is non-overlapping. By symmetry, assume $a<_w b$. Then there exists $a'\in odd(\pi(\msa)\cap \mcx_{s,n})$ such that $a<_w a'<_w b$.
 By our choice of $\pi$, $ht(a(2n-1))\leq ht(a'(0))<ht(a'(2n-1))\leq ht(b(0))$.
\end{proof}

\begin{defn}
Say $\mcf$ is a \emph{2-thin Kurepa family wrt $S$} if $\mcf$ satisfies the conclusion of Lemma \ref{lem stkf}.
\end{defn}
We would like to point out that above definition can be extended to all complete coherent $\omega_1$-tree $S\subseteq 2^{<\omega_1}$.

The proof of Lemma \ref{lem tkf ladder} induces a corresponding partition of $\alpha$ into $\omega$ intervals for countable limit $\alpha$.

\begin{lem}\label{lem 2tkf ladder}
Suppose $\mcf$ is a 2-thin Kurepa family wrt $S$ and $\{\msa\cap [S\up\alpha]^{<\omega}: \msa\in \mcf\}=\{\msa^n: n<\omega\}$ where $\alpha$ is a countable limit ordinal. Then there are an increasing sequence of ordinals $\langle \alpha_n: n<\omega\rangle$ cofinal in $\alpha$ and a sequence $\langle F_n\in [S\up\alpha]^{<\omega}: n<\omega\rangle$  satisfying the following properties.
\begin{enumerate}[(i)]
\item $F_n$'s are pairwise disjoint.
\item For every $n<\omega$, $F_n=cl(F_n, \{\msa^i: i<n\}) \subseteq \bigcup_{\alpha_{n-1}<\beta< \alpha_{n+1}} S_\beta$ and $F_n\cup {S\up \alpha_n}=cl(F_n\cup S\up \alpha_n, \{\msa^i: i<n\})$.
\item For every $n<\omega$ and every $s\in\bigcup_{\alpha_n\leq\beta< \alpha_{n+1}} S_\beta$, 
\[cl(\{s\}, \{\msa^i: i<n\})\subseteq \bigcup_{\alpha_n\leq\beta< \alpha_{n+1}} S_\beta \cup F_n\cup F_{n+1}\]
\end{enumerate}
\end{lem}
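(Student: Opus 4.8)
The plan is to run the proof of Lemma \ref{lem tkf ladder} essentially verbatim, under the dictionary that replaces an ordinal $\gamma<\alpha$ by the level $S_\gamma$, a set of ordinals below $\alpha$ by the corresponding set of nodes of $S\up\alpha$, and an interval $[\gamma,\delta)$ of ordinals by the ``slab'' $\bigcup_{\gamma\le\beta<\delta}S_\beta$. The only properties of $\omega_1$ used in that proof are: $\mcf\up\alpha$ is countable; the $cl$-closure of a finite set against a finite subfamily is finite; each family in $\mcf$ is non-overlapping; and the closure calculus of Lemma \ref{lem closure}. These correspond exactly to items (i), (iii), (iv) in the definition of a $2$-thin Kurepa family wrt $S$, together with the observation that Lemma \ref{lem closure} applies verbatim to $P([S]^{<\omega})$ (its proof uses nothing about $\omega_1$; alternatively, transport along a bijection $\omega_1\to S$). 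So the translation goes through.

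Concretely, I fix an increasing sequence $\langle\beta_n:n<\omega\rangle$ cofinal in $\alpha$ and build $\langle\alpha_n:n<\omega\rangle$ by recursion with $\beta_n\le\alpha_n<\alpha$ and $\alpha_n<\alpha_{n+1}$ so that $cl(\bigcup A_n,\{\msa^i:i\le n\})\subseteq S\up\alpha_{n+1}$, where $A_n$ is the set of $a\in\bigcup_{i\le n}\msa^i$ that straddle level $\alpha_n$ in the sense that $\min\{ht(t):t\in a\}\le\alpha_n\le\max\{ht(t):t\in a\}$; such an $\alpha_{n+1}<\alpha$ exists because $A_n$ is finite (the $\msa^i$ are non-overlapping) and $cl(\bigcup A_n,\{\msa^i:i\le n\})$ is then a finite subset of $S\up\alpha$ by property (iii). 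I then set $B_n=\{a\in\bigcup_{i<n}\msa^i:\min\{ht(t):t\in a\}\le\alpha_n\le\max\{ht(t):t\in a\}\}$ and $F_n=cl(\bigcup B_n,\{\msa^i:i<n\})$, so $F_0=\emptyset$ and $B_n\subseteq A_n$.

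The heart of the argument, exactly as in Lemma \ref{lem tkf ladder}, is to record that $F_n$, $S\up\alpha_n\cup F_n$, and $S\up\alpha_n\cup cl(\bigcup A_n,\{\msa^i:i\le n\})$ are $\{\msa^i:i<n\}$- (resp.\ $\{\msa^i:i\le n\}$-) closed: any $b\in\msa^i$ meeting one of these sets is either swallowed by the $cl$-part or, meeting only the $S\up\alpha_n$-part without being contained in it, straddles level $\alpha_n$ and so lies in $B_n$ (resp.\ $A_n$). Combining the recursion clause with these facts, $\bigcup B_{n+1}$ is disjoint from the $\{\msa^i:i\le n\}$-closed set $S\up\alpha_n\cup cl(\bigcup A_n,\{\msa^i:i\le n\})$, whence by Lemma \ref{lem closure}(ii) so is $F_{n+1}$. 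Now (i)--(iii) follow as before: $B_n\subseteq A_n$ and the recursion give $F_n\subseteq cl(\bigcup A_n,\{\msa^i:i\le n\})\subseteq S\up\alpha_{n+1}$; applying the disjointness just established with indices shifted down by one, $F_n$ contains no node of height $<\alpha_{n-1}$, and no node of height exactly $\alpha_{n-1}$ either (such a node would lie in some $a\in\msa^i$, $i<n$, with $a\in A_{n-1}$, contradicting $F_n\cap cl(\bigcup A_{n-1},\{\msa^i:i\le n-1\})=\emptyset$), which gives the level bound in (ii); consecutive $F_n,F_{n+1}$ are then separated via $F_n\subseteq cl(\bigcup A_n,\{\msa^i:i\le n\})$ and $F_{n+1}\cap cl(\bigcup A_n,\{\msa^i:i\le n\})=\emptyset$, and non-consecutive ones by the level bounds, giving (i); the equation in (ii) is one of the closure facts together with idempotency of $cl$; and for (iii) one checks set-theoretically, using the level bounds already obtained for $F_n$ and $F_{n+1}$, that $\bigcup_{\alpha_n\le\beta<\alpha_{n+1}}S_\beta\cup F_n\cup F_{n+1}=(S\up\alpha_{n+1}\cup F_{n+1})\setminus(S\up\alpha_n\setminus F_n)$, which is $\{\msa^i:i<n\}$-closed (closed sets form a $\sigma$-algebra and $S\up\alpha_{n+1}\cup F_{n+1}$, $S\up\alpha_n\cup F_n$, $F_n$ are all closed), hence contains $cl(\{s\},\{\msa^i:i<n\})$ whenever $s$ has height in $[\alpha_n,\alpha_{n+1})$.

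Since this is a pure translation, I do not expect a genuine obstacle; the point requiring care is purely bookkeeping — every ``interval'', ``straddling'', and ``initial segment'' condition in the proof of Lemma \ref{lem tkf ladder} must be reinterpreted in terms of the \emph{heights} $ht(t)$ of the nodes $t$ occurring in the finite sets $a\in\bigcup_i\msa^i$, and it is precisely property (iv) (non-overlapping in this height sense) that makes each $A_n$, hence each $B_n$, finite and makes the slabs $\bigcup_{\alpha_n\le\beta<\alpha_{n+1}}S_\beta$ a genuine partition of the levels below $\alpha$, so that the argument of Lemma \ref{lem tkf ladder} carries over without change.
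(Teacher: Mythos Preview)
Your proposal is correct and is exactly what the paper does: the paper gives no separate proof for Lemma~\ref{lem 2tkf ladder}, saying only that the proof of Lemma~\ref{lem tkf ladder} induces the corresponding result, and your write-up is precisely that translation spelled out in detail (ordinals $\leadsto$ heights, initial segments $\leadsto$ $S\up\alpha_n$, intervals $\leadsto$ slabs). Your bookkeeping is accurate, including the point that Lemma~\ref{lem closure} transfers because its proof is purely combinatorial in the closure operator, and your handling of the strict lower bound $\alpha_{n-1}$ on heights in $F_n$ mirrors the paper's argument for $\alpha_{n-1}\notin F_n$ in Lemma~\ref{lem tkf ladder}.
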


\begin{thm}\label{thm coloring}
Suppose   $S\subseteq 2^{<\omega_1}$ is a complete coherent Suslin tree and $\mcf$ is a 2-thin Kurepa family wrt $S$. Then there is a coloring $c: [S]^2\ra 2$ satisfying the following properties.
\begin{enumerate}
\item[(coh)] For $s<_S t$ in $S$, $\{x<_S s: c(x, s)\neq c(x, t)\}$ is finite.
\item[(hom)] For every $\alpha<\omega_1$,  $s\in S\up\alpha$, $0<n<\omega$, $\msa\in \mcf\cap [\mcx_{s, n}]^{\omega_1}$, and every $a\in [S_\alpha]^2$ with $D_{a(0), a(1)}=\{ht(s)\}$,  for all but finitely many $b\in \msa\cap [S\up \alpha]^{<\omega}$, there exists $i<2$ such that if $b(2n-1)<_S a(1)$, then
\[c(b(j), a(0))=i\text{ and }c(b(n+j), a(1))=1-i\text{ for all }j<n.\]
\end{enumerate}
\end{thm}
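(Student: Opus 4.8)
The construction will be by recursion on levels of $S$, defining $c\!\restriction\![S\!\up\!\alpha]^2$ at each stage so that (coh) is literally an inductive invariant and (hom) is handled at limit stages via the ladder decomposition of Lemma~\ref{lem 2tkf ladder}. I will think of $c$ as a coherent family of finitary colorings $c_s\colon S_{<s}\to 2$ for $s\in S$, where $c_s(x)=c(x,s)$; (coh) then says $\{x : c_s(x)\neq c_t(x)\}$ is finite for $s<_S t$, so $c_t$ is determined by $c_s$ up to a finite modification, and the coherent/complete structure of $S$ means that specifying $c_s$ on a single cofinal branch and closing under finite changes is consistent. Thus the real content is: at each countable limit level $\alpha$, choose the finitely-many ``correction bits'' on the new branches so that the genericity demanded by (hom) is met.

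\textbf{The limit step.} Fix a limit $\alpha$ and enumerate $\{\msa\cap[S\!\up\!\alpha]^{<\omega}:\msa\in\mcf\}=\{\msa^n:n<\omega\}$. Apply Lemma~\ref{lem 2tkf ladder} to get $\langle\alpha_n\rangle$ cofinal in $\alpha$ and the finite ``buffer'' sets $\langle F_n\rangle$. The plan is to define $c_s$ for $s\in S_\alpha$ interval by interval: having fixed the values on $S\!\up\!\alpha_n$ (coherently with what was built below), extend to $\bigcup_{\alpha_n\le\beta<\alpha_{n+1}}S_\beta$ so as to kill the $n$-th requirement coming from $\msa^n$. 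Property (iii) of the ladder guarantees that the $cl$-closure of any node in the $n$-th interval, under the first $n$ families, stays inside the $n$-th interval together with the two adjacent buffers $F_n\cup F_{n+1}$; this is exactly the ``each interval almost not affected by intervals below'' feature advertised before Lemma~\ref{lem tkf ladder}, and it is what lets the choices in different intervals not interfere. For the $\msa^n$-requirement: given $s$ and $n'$ with $\msa^n\subseteq\mcx_{s,n'}$, for each $a\in[S_\alpha]^2$ with $D_{a(0),a(1)}=\{ht(s)\}$ we must force, for all but finitely many $b\in\msa^n$, the split pattern $c(b(j),a(0))=i$, $c(b(n'+j),a(1))=1-i$. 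Since $b$ ranges over $\mcx_{s,n'}$ its first $n'$ coordinates sit above $s^\smallfrown 0$ and the last $n'$ above $s^\smallfrown 1$, so along $a(0)$ we only see the first half and along $a(1)$ only the second half; because $\msa^n$ is non-overlapping (Lemma~\ref{lem stkf}(iv)) we can, as we climb through the interval, reserve a correction bit on each new branch that passes through some $b\in\msa^n$, setting it to make the pattern hold. Finitely many exceptional $b$ are those whose coordinates have already been frozen by earlier intervals or whose top is $<_S a(1)$ fails — matching the ``$b(2n-1)<_S a(1)$'' hypothesis and the ``all but finitely many'' in (hom).

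\textbf{Coherence bookkeeping.} Along with (hom) I must maintain (coh). The key point is that in the limit step the correction we apply on the interval $[\alpha_n,\alpha_{n+1})$ affects only the nodes in that interval plus the buffers $F_n,F_{n+1}$; since the buffers are finite and each $F_n$ meets only finitely many intervals (indeed $F_n\subseteq\bigcup_{\alpha_{n-1}<\beta<\alpha_{n+1}}S_\beta$ by (ii)), any two nodes $s<_S t$ of $S$ differ in the data of $c$ only on a set that is a finite union of finite corrections, hence finite. A clean way to organize this is: maintain a single ``reference branch'' $b^*$ through each $s\in S_\alpha$, define $c_s$ on $S_{<s}$ as the union over $n$ of the interval-colorings, and note that for $s\ne t$ in $S_\alpha$, $D_{s,t}$ is finite (coherence of $S$) so $s$ and $t$ route through the same intervals cofinally and their correction bits agree off a finite set. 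One then takes $c$ on $[S_\alpha]^2$ to be the common coherent closure and checks (coh) against all lower levels by the inductive hypothesis plus finiteness of the new corrections.

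\textbf{Main obstacle.} The hard part is the simultaneous scheduling at a limit: a single node $s$ imposes, via all the $\msa^n\subseteq\mcx_{s,n'}$ with $s\in S\!\up\!\alpha$, infinitely many requirements, and each $\msa^n$ demands a pattern for \emph{every} $a\in[S_\alpha]^2$ with $D_{a(0),a(1)}=\{ht(s)\}$ simultaneously, while the correction bits available on a branch through $b\in\mcx_{s,n'}$ are shared across all such $a$. Resolving this needs the $D_{a(0),a(1)}=\{ht(s)\}$ normalization together with completeness of $S$: the value $c(b(j),a(0))$ should depend on $a(0)$ only through $a(0)\wedge$ (the relevant branch) up to a finite error that is absorbed by (coh), so that a single bit choice on $b$ works uniformly for all admissible $a$ once we are high enough. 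Making this uniformity precise — i.e. showing the ``all but finitely many $b$'' can be chosen independently of $a$ and of the infinitely many competing requirements, by a diagonalization that assigns the $n$-th family to the $n$-th interval and uses Lemma~\ref{lem 2tkf ladder}(iii) to confine cross-talk — is where essentially all the work lies; (coh) itself is then a routine finiteness check.
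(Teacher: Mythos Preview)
Your overall scaffolding matches the paper: recursion on levels, $c_s:S_{<s}\to 2$ with (coh) as invariant, ladder decomposition at limits via Lemma~\ref{lem 2tkf ladder}, interval-by-interval definition. But the two ideas that actually make the construction go through are absent from your proposal, and without them the scheme you describe does not close.

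First, you never use the inductive hypothesis for (hom) at the intermediate levels $\alpha_{k+1}$. In the paper, at the $k$-th interval one does \emph{not} try to ``handle $\msa^k$'' there; instead one looks at all $\msa^i$ with $i<k$ and all pairs $s_i,s_j$ with $i,j<k$, and invokes (hom) already established at height $\alpha_{k+1}$ to conclude that only a \emph{finite} set $E_k$ of $b$'s in that interval are bad for $(s_i\!\up\!\alpha_{k+1},s_j\!\up\!\alpha_{k+1})$. One then copies $c_{s_i\!\up\!\alpha_{k+1}}$ on the rest of the interval, so goodness of the remaining $b$ is inherited. Your ``kill the $n$-th requirement in the $n$-th interval'' plan has no such finiteness input and gives you infinitely many $b$ to correct.

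Second --- and this is the point you flag as the ``Main obstacle'' but do not resolve --- the paper has a concrete mechanism for the uniformity over all admissible $a\in[S_\alpha]^2$: on the exceptional closed set $X_k=F_k\cup cl(\bigcup E_k,\{\msa^i:i<k\})$, one sets $c_{s_i}$ to the \emph{constant} value $l_i\equiv|D_{s_0,s_i}|\pmod 2$. The single observation $|D_{s_i,s_j}|=1\Rightarrow l_i\neq l_j$ then gives the split pattern simultaneously for every pair $(s_i,s_j)$ at Hamming distance $1$, with no further bookkeeping. Your ``reserve a correction bit'' language suggests choosing values per $b$ or per branch; that cannot be made consistent across the countably many target pairs $a$ without something like this parity device, and nothing in your outline supplies it.
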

\begin{proof}
We first define $c(\{s, t\})=1$ if $s$ is incomparable with $t$. For $s<_S t$, use $c(s, t)$ to denote $c(\{s, t\})$. We will then define $c_s: S_{<s}\ra 2$ for all $s\in S$ and let $c(x, s)=c_s(x)$. We define $\{c_s: s\in S_\alpha\}$ by induction on $\alpha$.

For $s\in S, 0<n<\omega$, $a\in \mcx_{s,n}$ and $u, v\in S$ with $D_{u,v}=\{ht(s)\}$ and $a(n-1)<_S u$, $a(2n-1)<_S v$, say $(a, u, v)$ is $c$-good if there exists $i<2$ such that $c_u(a(j))=i$ and $c_v(a(n+j))=1-i$ for all $j<n$.

Note that (coh) and (hom) are preserved by finite modifications. So the successor steps are not important. Define $c_s$, for $s\in S_{\alpha+1}$, by $c_s=c_{s\up \alpha}\cup \{(s\up\alpha,0)\}$.

Now suppose $\alpha$ is a limit ordinal and $\{c_s: s\in S\up_\alpha\}$ is defined such that (coh) and (hom) are satisfied up to $\alpha$. Let $\{s_n: n<\omega\}=S_\alpha$  and $\{\msa^n: n<\omega\}=\{\msa\cap [S\up\alpha]^{<\omega}: \msa\in \mcf\}$.

Choose sequences $\langle \alpha_n: n<\omega\rangle$ and $\langle F_n: n<\omega\rangle$ satisfying the conclusion of Lemma \ref{lem 2tkf ladder}.

For $i<n<\omega$, denote
\[D_{i,n}=S_{<s_i\up \alpha_n}\setminus F_n.\]
We now define $\langle c_{s_i}\up D_{i,n}: i<n\rangle$ by induction on $n$. Note that
\[S_{<s_i\up \alpha_{n-1}}\subseteq D_{i,n}\subseteq S_{<s_i\up \alpha_n}.\]

There is nothing to define for $n=0$ and so we describe the definition for $n=k+1$.

Let $E_{k}$ be the collection of $a\in (\bigcup_{i< k} \msa^i)\cap [\bigcup_{\alpha_k\leq \xi <\alpha_{k+1}} S_\xi]^{<\omega}$ such that
\begin{itemize}
\item $a\cap F_{k+1}=\emptyset$ and for some $ i, j< k$,
$|D_{s_i, s_j}|=1$, $ a(\frac{|a|}{2}-1)<_S s_i$, $a(|a|-1)<_S s_j$ and $(a, s_i\up \alpha_{k+1}, s_j\up \alpha_{k+1})$ is not $c$-good.
\end{itemize}
Note by induction hypothesis at step $\alpha_{k+1}$, $E_{k}$ is finite. Let
\begin{align*}
X_{k}= F_k\cup cl(\bigcup E_{k}, \{\msa^i: i< k\}).
\end{align*}

Note $\bigcup E_{k}\subseteq \bigcup_{\alpha_k\leq \xi <\alpha_{k+1}} S_\xi\setminus F_{k+1}$. By Lemma \ref{lem 2tkf ladder}, 
\[cl(\bigcup E_{k}, \{\msa^i: i< k\})\subseteq \bigcup_{\alpha_k\leq \xi <\alpha_{k+1}} S_\xi \cup F_k\setminus F_{k+1}.\]
Since $F_k$ is disjoint from $F_{k+1}$,
\begin{enumerate}
\item  $F_k\subseteq X_{k}\subseteq \bigcup_{\alpha_k\leq \xi <\alpha_{k+1}} S_\xi \cup F_k\setminus F_{k+1}$.
\end{enumerate}

Fix $i< k$. 
  Note by (1),
  \[X_{k}\cap S_{<s_i\up\alpha_k}=S_{<s_i\up\alpha_k}\setminus D_{i, k}\]
   and hence $c_{s_i}$ is undefined on $X_{k}\cap S_{<s_i}$ at step $k$. Let $l_i<2$ be such that 
\[|D_{s_0, s_i}|\equiv l_i\text{ mod 2}.\]
First
\begin{enumerate}\setcounter{enumi}{1}
\item define $c_{s_i}$ to be constant $l_i$ on $X_{k}\cap S_{<s_i}$.
\end{enumerate}
  Then 
  \begin{enumerate}\setcounter{enumi}{2}
\item define $c_{s_i}$ to equal $c_{s_i\up \alpha_{k+1}}$ on 
\[\{x\in S_{<s_i}: \alpha_k\leq ht(x)<\alpha_{k+1}\}\setminus (X_{k}\cup F_{k+1}).\]
\end{enumerate}
By (1)-(3), $c_{s_i}$ is defined  on 
\[D_{i,k}\cup (X_{k}\cap S_{<s_i})\cup \{x\in S_{<s_i}: \alpha_k\leq ht(x)<\alpha_{k+1}\}\setminus  F_{k+1}=D_{i, k+1}.\]

Finally 
\begin{enumerate}\setcounter{enumi}{3}
\item define $c_{s_k}$ to equal $c_{s_k\up \alpha_{k+1}}$ on $D_{k,k+1}$.
\end{enumerate}

This finishes the induction at step $n=k+1$. 

Now suppose $\langle c_{s_i}\up D_{i,n}: i<n\rangle$ is defined for all $n$. Then for every $i<\omega$, $c_{s_i}$ is defined on $\bigcup_{i<n<\omega} D_{i,n}=S_{<s_i}$.

We check that (coh) and (hom) are satisfied for $\{c_s: s\in S\up \alpha+1\}$.

To show (coh), it suffices to prove that for every $i<n<\omega$, $c_{s_i}$ agrees with $c_{s_i\up \alpha_n}$ for all but finitely points in $S_{<s_i\up\alpha_n}$. Fix $i<\omega$ and we prove by induction on $n$. 

For $n=i+1$, note  by (4),
\[\{x<_S s_i\up \alpha_{i+1}: c_{s_i}(x)\neq c_{s_i\up \alpha_{i+1}}(x)\}\subseteq S_{<s_i\up\alpha_{i+1}}\setminus D_{i,i+1}\subseteq F_{i+1}\]
 and is finite.
 
 Now suppose that the statement is true for $n=k>i$ and we prove for $n=k+1$. By (3),
 \[\{x\in S_{<s_i\up\alpha_{k+1}}:  ht(x)\geq\alpha_k, c_{s_i}(x)\neq c_{s_i\up\alpha_{k+1}}(x)\}\subseteq X_{k}\cup F_{k+1}\]
 and is finite. Then by induction hypothesis on $n=k$ and induction hypothesis of (coh) at step $\alpha_{k+1}$, both $c_{s_i\up\alpha_{k+1}}$ and $c_{s_i}$ agree with $c_{s_i\up\alpha_k}$ on all but finitely many points of $S_{<s_i\up\alpha_k}$. So $c_{s_i}$ agrees with $c_{s_i\up\alpha_{k+1}}$ on all but finitely many points of $S_{<s_i\up \alpha_{k+1}}$.
 
 This shows (coh) and we check (hom). Fix $s\in S\up \alpha$, $0<n'<\omega$, $\msa\in \mcf\cap [\mcx_{s,n'}]^{\omega_1}$ and $i, j<\omega$ with $D_{s_i, s_j}=\{ht(s)\}$ and $s_i(ht(s))=0$. Assume that $\msa\cap [S\up\alpha]^{<\omega}=\msa^n$. 
 
 It is equivalent to prove that $(a, s_i, s_j)$ is $c$-good for all but finitely many $a\in \msa^n$  with $a(2n'-1)<_S s_j$.
 Denote
 \[N=\max\{i, j, n\}+1.\]
 
 First note by induction hypothesis at step $\alpha_N$, $(a, s_i\up\alpha_N, s_j\up\alpha_N)$ is $c$-good for all but finitely many $a\in \msa\cap [S\up\alpha_N]^{<\omega}$  with $a(2n'-1)<_S s_j$. Then by (coh), $(a, s_i, s_j)$ is $c$-good for all but finitely many $a\in \msa\cap [S\up\alpha_N]^{<\omega}$  with $a(2n'-1)<_S s_j$.
 
 Now it suffices to prove that $(a, s_i, s_j)$ is $c$-good for all $a\in \msa^n\setminus [S\up\alpha_N]^{<\omega}$ with $a(2n'-1)<_S s_j$. Fix $a\in \msa^n\setminus [S\up\alpha_N]^{<\omega}$ with $a(2n'-1)<_S s_j$ and we prove by cases.  Assume $\max(a)\in [\alpha_k, \alpha_{k+1})$ for some $k$. Clearly $k\geq N$.
 
  Note by Lemma \ref{lem 2tkf ladder} and the fact $k\geq N>n$, 
   \begin{enumerate}\setcounter{enumi}{4}
 \item $a\subseteq cl(\{\max(a)\},\{\msa^i: i<k\})\subseteq\bigcup_{\alpha_k\leq\beta< \alpha_{k+1}} S_\beta \cup F_k\cup F_{k+1}$.
  \end{enumerate}
 \medskip
 
 \textbf{Case 1.} $a\cap (F_k\cup F_{k+1})\neq\emptyset$.\medskip
 
We prove for the case $a\cap F_k\neq\emptyset$ and the argument works for $a\cap F_{k+1}\neq\emptyset$ as well. Since $k\geq N>n$, $a\subseteq F_k$. Then by (1), $a\subseteq X_{k}$. 

Recall that  $l_m<2$ satisfies $|D_{s_0, s_m}|\equiv l_m$ mod 2. 

By definition of $c_{s_i}$  and $c_{s_j}$ on $X_{k}$, $c_{s_i}$ is constant $l_i$ on $\{a(m): m<n'\}$ and $c_{s_j}$ is constant $l_j$ on $\{a(n'+m): m<n'\}$.
 
 Since $D_{s_i, s_j}=\{ht(s)\}$, $D_{s_0, s_i}\Delta ~D_{s_0, s_j}=\{ht(s)\}$ and hence $l_j=1-l_i$. This shows that $(a, s_i, s_j)$ is $c$-good.\medskip
 
  \textbf{Case 2.} $a\cap X_{k}\neq\emptyset$.\medskip
  
  We may assume that Case 1 does not occur. So $a\cap cl(\bigcup E_{k}, \{\msa^i: i< k\})\neq \emptyset$. Since $k>n$, $a\subseteq cl(\bigcup E_{k}, \{\msa^i: i< k\})$. Then $a\subseteq X_{k}$ and the argument of Case 1 shows that $(a, s_i, s_j)$ is $c$-good.\medskip
  
  \textbf{Case 3.} Otherwise.\medskip
  
  By (5), $a\in [\bigcup_{\alpha_k\leq \xi <\alpha_{k+1}} S_\xi\setminus (F_k\cup F_{k+1})]^{<\omega}$. By definition of $X_{k}$, $a\notin E_{k}$ and $(a, s_i\up \alpha_{k+1}, s_j\up \alpha_{k+1})$ is $c$-good. Since $a\cap (X_{k}\cup F_{k+1})=\emptyset$, $c_{s_i}$ ($c_{s_j}$) is defined according to $c_{s_i\up \alpha_{k+1}}$ ($c_{s_j\up\alpha_{k+1}}$) on $a$. Hence, $(a, s_i, s_j)$ is $c$-good.\medskip
  
  So in any case, $(a, s_i, s_j)$ is $c$-good. This shows (hom) at step $\alpha$. Now the theorem follows from a standard induction.
\end{proof}

It is clear that (coh) and (hom) are absolute. The proof of Lemma \ref{lem ccc indestructible} shows that 2-thin Kurepa family wrt $S$ is ccc indestructible (even if Suslin of $S$ is destroyed).
\begin{lem}\label{lem S ccc indestructible}
Suppose $S\subseteq 2^{<\omega_1}$ is an $\omega_1$-tree, $\mcf$ is a 2-thin Kurepa family wrt $S$ and $\pp$ is a ccc poset. Then 
\[\Vdash_\pp \check{\mcf}\text{ is a 2-thin Kurepa family wrt } S.\]
\end{lem}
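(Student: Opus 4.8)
The plan is to mimic the proof of Lemma \ref{lem ccc indestructible}, checking that each of the four clauses (i)--(iv) in the conclusion of Lemma \ref{lem stkf} is preserved by ccc forcing. Let $\mcf$ be a 2-thin Kurepa family wrt $S$ and let $\pp$ be ccc. Clauses (i), (iii) and (iv) are $\Sigma_0$-type statements about $\mcf$, $S$ and the countable objects they involve: (i) says a certain family of restrictions is countable, (iii) asserts finiteness of closures $cl(a,\mcf')$ for $a\in[S]^{<\omega}$ and $\mcf'\in[\mcf]^{<\omega}$ (and, as noted after Definition \ref{defn tkf}, such closures are computed inside some $S\up\alpha$ with $\alpha<\omega_1$, so the statement is absolute), and (iv) is a statement about each individual $\msa\in\mcf$ which does not change. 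Also note $\mcx_{s,n}$ is absolutely defined from $S$ and each $\msa\in\mcf$ remains an uncountable subset of $\mcx_{s,n}$ in the extension since no new ordinals $<\omega_1$ appear and the relations $<_S$, $<_{ho}$, $D_{\cdot,\cdot}$ are absolute. So the only real content is clause (ii), the analogue of (K2).

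For clause (ii), I would copy the density argument of Lemma \ref{lem ccc indestructible} essentially verbatim. Fix $p\in\pp$, $s\in S$, $0<n<\omega$, and a name $\dot\msa$ with $p\Vdash\dot\msa\subseteq\mcx_{s,n}$ uncountable and pairwise disjoint. For each $\alpha<\omega_1$ pick $p_\alpha\le p$ and $a_\alpha\in\mcx_{s,n}$ with $p_\alpha\Vdash a_\alpha\in\dot\msa$, and (since $\dot\msa$ is forced pairwise disjoint and uncountable) arrange by a pressing-down/thinning argument that $\{a_\alpha:\alpha\in\Gamma\}$ is genuinely pairwise disjoint for some $\Gamma\in[\omega_1]^{\omega_1}$; here one should also thin so that the $a_\alpha$ have large $ht$, using that $S$ is an $\omega_1$-tree and $\mcx_{s,n}\cap[S\up\gamma]^{<\omega}$ is countable for each $\gamma$, so that arbitrarily high levels are available. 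Now $\{a_\alpha:\alpha\in\Gamma\}$ is an uncountable pairwise disjoint subfamily of $\mcx_{s,n}$ in $V$, so by clause (ii) for $\mcf$ in $V$ there is $\msb\in\mcf$ with $\Gamma'=\{\alpha\in\Gamma:a_\alpha\in\msb\}$ uncountable. Since $\pp$ is ccc, there is $q\le p$ forcing $\{\alpha\in\Gamma':p_\alpha\in\dot G\}$ uncountable, whence $q\Vdash\dot\msa\cap\check\msb\supseteq\{a_\alpha:\alpha\in\Gamma',\ p_\alpha\in\dot G\}$ is uncountable. A density argument over $p$ then gives $\Vdash_\pp$ clause (ii).

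The main obstacle — really the only point that needs a little care beyond Lemma \ref{lem ccc indestructible} — is the thinning step that produces a genuinely pairwise disjoint $\{a_\alpha:\alpha\in\Gamma\}$ from the $a_\alpha$'s, because unlike in Lemma \ref{lem ccc indestructible} the elements $a_\alpha$ live in $[S]^{2n}$ rather than in $[\omega_1]^{<\omega}$, so "pairwise disjoint" here means disjoint as sets of nodes, and one wants in addition that the resulting family lands in $\mcx_{s,n}$ (which it automatically does, as membership in $\mcx_{s,n}$ is a property of each individual $a_\alpha$). The cleanest route is: first fix $s$ and $n$ (these are determined by $\dot\msa$ via $p$, or one may first refine $p$ to decide them), then identify $S$ with $\omega_1$ via the bijection $\pi$ used in Lemma \ref{lem stkf} and apply the usual $\Delta$-system / disjoint-refinement lemma to the images $\pi^{-1}[a_\alpha]\in[\omega_1]^{2n}$, noting that a pairwise disjoint family forced inside $\dot\msa$ already has empty root so one can extract an uncountable genuinely pairwise disjoint subfamily. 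Everything else is a transcription of the earlier argument, and I would state it as such rather than write it out in full.
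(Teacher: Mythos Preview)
Your proposal is correct and follows exactly the approach the paper intends: the paper does not write out a separate proof but simply says ``The proof of Lemma \ref{lem ccc indestructible} shows that 2-thin Kurepa family wrt $S$ is ccc indestructible,'' and your write-up is precisely that transcription, with the expected observation that (i), (iii), (iv) are absolute and (ii) is handled by the same density argument. The extra care you take about thinning to a genuinely pairwise disjoint subfamily of $\mcx_{s,n}$ is fine and indeed the only point where the $S$-version differs cosmetically from the $\omega_1$-version.
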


\begin{thm}\label{thm precaliber}
Suppose $S\subseteq 2^{<\omega_1}$ is a complete coherent Suslin tree, $\mcf$ is a 2-thin Kurepa family wrt $S$ and $c: [S]^2\ra 2$ is a coloring satisfying (coh) and (hom). Then for every $s\in S$, $0<n<\omega$ and every uncountable  family $\mc{A}\subseteq[\mcx_{s,n}]^{<\omega}$ with $\{\bigcup x: x\in \mc{A}\}$ pairwise disjoint, there exists $\mc{B}\in [\mc{A}]^{\omega_1}$ such that for $x\neq y$ in $\mc{B}$ and $a\in x$, $b\in y$, if $a(2n-1)<_S b(2n-1)$, then for some $i<2$,
\[c(a(j), b(k))=i \text{ and } c(a(n+j), b(n+k))=1-i \text{ whenever } j,k<n.\]
\end{thm}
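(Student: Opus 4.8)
The plan is to derive the conclusion from (coh) and (hom) after finitely many thinnings of $\mc{A}$ and one application of Lemma~\ref{lem stkf}(ii), and then to pass to an uncountable subfamily by a free-set argument. First I would prepare $\mc{A}$. By pigeonhole we may assume all $x\in\mc{A}$ have a common size $m\ge 1$, and for $x\in\mc{A}$ write $x=\{a^x_0,\dots,a^x_{m-1}\}$, the $a^x_p$ being listed by the $\lo$-order of their top nodes $a^x_p(2n-1)$. Since the sets $\bigcup x$ are pairwise disjoint and each level $S_\beta$ is countable, only countably many $x$ meet a fixed $S_\beta$; hence $\{\min\{ht(t):t\in\bigcup x\}:x\in\mc{A}\}$ is uncountable, and a routine recursion produces an uncountable subfamily, reindexed as $\{x_\xi:\xi<\omega_1\}$, with $\max\{ht(t):t\in\bigcup x_\xi\}<\min\{ht(t):t\in\bigcup x_\eta\}$ whenever $\xi<\eta$. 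Then, for $p=0,\dots,m-1$ in turn, $\{a^{x_\xi}_p:\xi<\omega_1\}$ is an uncountable pairwise disjoint subfamily of $\mcx_{s,n}$, so Lemma~\ref{lem stkf}(ii) yields $\msb_p\in\mcf$ meeting it uncountably often; thinning each time (and reindexing) we may assume $a^{x_\xi}_p\in\msb_p$ for all $\xi$ and all $p<m$. Since $\msb_p\subseteq\mcx_{s',n'}$ for some $s',n'$ and contains an element of $\mcx_{s,n}$, comparing the defining conditions forces $n'=n$ and $s'=s$, so in fact $\msb_p\in\mcf\cap[\mcx_{s,n}]^{\omega_1}$.

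Next I would build, for each $\eta<\omega_1$ and $q<m$, a test pair to feed into (hom). Writing $b=a^{x_\eta}_q$, set $\alpha_{\eta,q}=ht(b(0))$, $u_{\eta,q}=b(0)$ and $v_{\eta,q}=b(n)\up\alpha_{\eta,q}$. Clauses (1)--(2) of Definition~\ref{def2} give $u_{\eta,q},v_{\eta,q}\in S_{\alpha_{\eta,q}}$, $ht(s)<\alpha_{\eta,q}<ht(b(n))$ (so $v_{\eta,q}<_S b(n)$), and $D_{u_{\eta,q},v_{\eta,q}}=\{ht(s)\}$ with $u_{\eta,q}\lo v_{\eta,q}$; thus $\{u_{\eta,q},v_{\eta,q}\}$ is a legitimate input to (hom) at level $\alpha_{\eta,q}$ over $s$. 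Applying (hom) to $\alpha_{\eta,q}$, $s$, $n$, $\msb_p$, $\{u_{\eta,q},v_{\eta,q}\}$ yields a finite $E^1_{\eta,q,p}\subseteq\msb_p$ for each $p<m$; and applying (coh) to $b(0)<_S b(k)$ $(1\le k<n)$ and to $v_{\eta,q}<_S b(n+k)$ $(0\le k<n)$ yields finite sets $E^2_{\eta,q}\subseteq S_{<b(0)}$ and $E^3_{\eta,q}\subseteq S_{<v_{\eta,q}}$, outside of which the colour of a node to $b(0)$ (resp. to $v_{\eta,q}$) already decides its colour to every $b(k)$, $k<n$ (resp. to every $b(n+k)$, $k<n$). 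None of these sets depends on $\xi$. Let $\mathrm{bad}(\eta)$ be the set of $\xi<\eta$ such that $a^{x_\xi}_p\in E^1_{\eta,q,p}$ for some $p,q$, or some coordinate of $a^{x_\xi}_p$ lies in $E^2_{\eta,q}\cup E^3_{\eta,q}$ for some $p,q$; then $\mathrm{bad}(\eta)\subseteq\eta$ is finite, since $\xi\mapsto a^{x_\xi}_p$ and $\xi\mapsto a^{x_\xi}_p(l)$ are injective. Fodor applied to $\eta\mapsto\sup(\mathrm{bad}(\eta))+1$ then yields an uncountable $Z$ with $\xi\notin\mathrm{bad}(\eta)$ for all $\xi<\eta$ in $Z$, and I claim $\mc{B}:=\{x_\xi:\xi\in Z\}$ works.

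For the verification: take $x\ne y$ in $\mc{B}$, $a\in x$, $b\in y$ with $a(2n-1)<_S b(2n-1)$. Disjointness gives $ht(a(2n-1))<ht(b(2n-1))$, so the height separation forces $x=x_\xi$, $y=x_\eta$ with $\xi<\eta$; write $a=a^{x_\xi}_p$, $b=a^{x_\eta}_q$. From $b(n)\le_S b(2n-1)$, $a(2n-1)<_S b(2n-1)$ and $ht(a(2n-1))<ht(b(n))$ (the latter from the height separation together with $ht(b(n-1))<ht(b(n))$) we get $a(2n-1)<_S b(n)$, and since $ht(a(2n-1))<\alpha_{\eta,q}$ also $a(2n-1)<_S v_{\eta,q}$; moreover every coordinate of $a$ has height below $\alpha_{\eta,q}$, so $a\in(\msb_p\cap[S\up\alpha_{\eta,q}]^{<\omega})\setminus E^1_{\eta,q,p}$. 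Hence (hom) provides $i<2$ with $c(a(j),b(0))=i$ and $c(a(n+j),v_{\eta,q})=1-i$ for all $j<n$. Fix $j,k<n$. If $a(j)<_S b(0)$ then $a(j)<_S b(0)<_S b(k)$ and $a(j)\notin E^2_{\eta,q}$ give $c(a(j),b(k))=c(a(j),b(0))=i$; otherwise, by the heights, $a(j)$ is $\lo$-incomparable with $b(0)$, forcing $i=c(a(j),b(0))=1$, and $a(j)$ is also incomparable with $b(k)$, so $c(a(j),b(k))=1=i$. Likewise $a(n+j)<_S v_{\eta,q}<_S b(n+k)$ and $a(n+j)\notin E^3_{\eta,q}$ give $c(a(n+j),b(n+k))=c(a(n+j),v_{\eta,q})=1-i$. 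Thus this $i$ witnesses the required conclusion for $(a,b)$.

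The step I expect to be most delicate is the choice of the test pair: $v_{\eta,q}$ must be placed at the level $ht(b(0))$ — high enough to dominate all of $x_\xi$, so that (hom)'s hypothesis ``$b(2n-1)<_S a(1)$'' becomes the true relation $a(2n-1)<_S v_{\eta,q}$, yet low enough inside $x_\eta$ that $u_{\eta,q}=b(0)$ and $v_{\eta,q}$ sit at (or strictly below) the bottoms of the $<_S$-chains $b(0)<_S\cdots<_S b(n-1)$ and $b(n)<_S\cdots<_S b(2n-1)$, so that (coh) can carry the colours produced by (hom) up to the true nodes of $b$. Verifying all of these comparabilities at once — and observing that where $c$ is forced to equal $1$ (on $\lo$-incomparable pairs) this is consistent with the $i$ returned by (hom) — is the crux. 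The remaining point to stress is that the exceptional sets depend only on $\eta$ and not on $\xi$, which is precisely what upgrades the free-set argument's output from a cofinal to a genuinely uncountable $\mc{B}$.
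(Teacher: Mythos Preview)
Your proof is correct and follows essentially the same route as the paper: thin to a non-overlapping family of constant size $m$, apply Lemma~\ref{lem stkf}(ii) coordinatewise to get $\msb_p\in\mcf$, apply (hom) at the level $ht(b(0))$ to the pair $\{b(0),\,b(n)\upharpoonright ht(b(0))\}$, use (coh) to propagate the colours up the two $<_S$-chains in $b$, and finish with a free-set/Fodor argument. Your explicit treatment of the case where $a(j)\perp b(0)$ (forcing $i=1$ via the incomparability convention for $c$) is a detail the paper leaves implicit, but otherwise the two arguments coincide.
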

\begin{proof}
Replacing $\mc{A}$ by an uncountable subset, we may assume that $\{\bigcup x: x\in \mc{A}\}$ is non-overlapping.  Moreover, assume for some $m<\omega$, $|x|=m$ for all $x\in \mc{A}$. 

Let $\{x_\alpha: \alpha<\omega_1\}$ list $\mc{A}$. For $\alpha<\omega_1$, enumerate $x_\alpha$ as $a_{\alpha,0},..., a_{\alpha, m-1}$. By  Lemma \ref{lem stkf} (ii), inductively find $\Gamma_0\supseteq\cdots\Gamma_{m-1}$ in $[\omega_1]^{\omega_1}$ and $\langle \msa^i\in \mcf: i<m\rangle$ such that for every $i<m$,
\begin{enumerate}
\item $\{a_{\alpha, i}: \alpha\in \Gamma_i\}\subseteq \msa^i$.
\end{enumerate}

Let $c_s$ be the map defined on $S_{<s}$ by $c_s(t)=c(t, s)$.

For  every $a\in \mcx_{s,n}$, by (coh),  there is a finite set $F_a\subseteq S$ such that for every $k<n$, $c_{a(k)}$ agrees with $c_{a(0)}$ on $S_{<a(0)}\setminus F_a$ and $c_{a(n+k)}$ agrees with $c_{a(n)\up ht(a(0))}$ on $S_{<a(n)\up ht(a(0))}\setminus F_a$.

For every $a\in \mcx_{s,n}$, by (hom),  there is a finite set $F'_a\subseteq S$ such that for every $b\in \bigcup_{i<m} (\msa^i\cap [S\up ht(a(0))]^{<\omega})$, if $b(2n-1)<_S a(2n-1)$ and $b\cap F'_a=\emptyset$, then for some $i<2$,
\[c(b(j), a(0))=i\text{ and }c(b(n+j), a(n)\up ht(a(0)))=1-i\text{ for all }j<n.\]

Let $F_\alpha=\bigcup_{a\in x_\alpha} F_a\cup F'_a$ for $\alpha\in \Gamma_{m-1}$. 

Then for $\alpha\in\Gamma_{m-1}$, $a\in x_\alpha$ and $b\in \bigcup_{i<m} (\msa^i\cap [S\up ht(a(0))]^{<\omega})$, 
\begin{enumerate}\setcounter{enumi}{1}
\item if $b(2n-1)<_S a(2n-1)$ and $b\cap F_\alpha=\emptyset$, then for some $i<2$,
\[c(b(j), a(k))=i\text{ and }c(b(n+j), a(n+k))=1-i\text{ whenever }j, k<n.\]
\end{enumerate}

By (1), (2) holds for all $\xi\neq\alpha$ in $\Gamma_{m-1}$ and all $b\in x_\xi$, $a\in x_\alpha$.

Recall that $F_\alpha$'s are finite sets and $\{\bigcup x: x\in \mc{A}\}$ is pairwise disjoint. Now find $\Gamma\in [\Gamma_{m-1}]^{\omega_1}$ such that for $\xi\neq\alpha$ in $\Gamma$, $(\bigcup x_\xi)\cap F_\alpha=\emptyset$. Then $\mc{B}=\{x_\alpha: \alpha\in \Gamma\}$ is as desired.
\end{proof}

We recall a property from \cite{PW24}. 
\begin{defn}[\cite{PW24}]
\begin{enumerate}
\item For $n>0$, $\langle (t_i, N_i)\in S\times \omega\setminus\{0\}: i<n\rangle$ and non-overlapping families $\langle \msa^i\in [\mcx_{t_i, N_i}]^{\omega_1}: i<n\rangle$, $\varphi_0(c, \msa^0,..., \msa^{n-1})$ is the assertion that for any collection $\langle s_\alpha: \alpha<\omega_1\rangle$ of pairwise distinct elements of $S$ and any $\langle x^i_\alpha\in [\msa^i]^{<\omega}: i<n, \alpha<\omega_1\rangle$ such that $\langle x^i_\alpha: \alpha<\omega_1\rangle$ is pairwise disjoint for each $i<n$, there are $\alpha<\beta$ such that
\begin{enumerate}[$(i)$]
\item $s_\alpha<_S s_\beta$;
\item for every $i<n$, $a\in x^i_\alpha$ and $b\in x^i_\beta$, there is $j<2$ such that for all $k, k'<N_{i}$,
$$c(a(k), b(k'))=j \text{ and } c(a(N_{i}+k), b(N_{i}+k'))=1-j.$$
\end{enumerate}

\item $\varphi_0(c)$ is the assertion that for $n>0$, $\langle (t_i, N_i)\in S\times \omega\setminus\{0\}: i<n\rangle$ and non-overlapping families $\langle \msa^i\in [\mcx_{t_i, N_i}]^{\omega_1}: i<n\rangle$, $\varphi_0(c, \msa^0,..., \msa^{n-1})$ holds.
\end{enumerate}
\end{defn}
By Theorem \ref{thm precaliber}, $\varphi_0(c)$ holds.
We then introduce a poset that is slightly different from the one in \cite[Definition 6]{PW24}.
\begin{defn}\label{defn non-absolute}
For $0<n<\omega$, $s\in S$ and non-overlapping family $\msa\in [\mcx_{s,n}]^{\omega_1}$, $\mc{Q}_\msa$ is the poset consisting of $p\in [\msa]^{<\omega}$ such that
\begin{itemize}
\item for $a, b$ in $p$ with $a(2n-1)<_S b(2n-1)$, there is $i<2$ satisfying 
\[c(a(j), b(k))=i\text{ and $c(a(n+j), b(n+k))=1-i$ whenever }j,k<n.\] 
\end{itemize}
The order is reverse inclusion.
\end{defn}

$\varphi_0(c)$ asserts that every finite product of posets defined above is ccc. And the conclusion of Theorem \ref{thm precaliber} asserts that posets defined above have precaliber $\omega_1$. Moreover, by the proof of \cite[Theorem 13]{PW24}, to every poset defined above, there is a proper forcing that adds an uncountable antichain.
\begin{cor}\label{cor varphi}
Suppose $S\subseteq 2^{<\omega_1}$ is a complete coherent Suslin tree, $\mcf$ is a 2-thin Kurepa family wrt $S$ and $c: [S]^2\ra 2$ is a coloring satisfying (coh) and (hom). Then $\varphi_0(c)$ holds.
\end{cor}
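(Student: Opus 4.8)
The plan is to deduce $\varphi_0(c)$ directly from Theorem \ref{thm precaliber} by reorganizing the data in the definition of $\varphi_0(c,\msa^0,\dots,\msa^{n-1})$ into the shape required by the hypothesis of Theorem \ref{thm precaliber}. So fix $n>0$, tuples $\langle(t_i,N_i):i<n\rangle$, non-overlapping families $\langle\msa^i\in[\mcx_{t_i,N_i}]^{\omega_1}:i<n\rangle$, a sequence $\langle s_\alpha:\alpha<\omega_1\rangle$ of pairwise distinct elements of $S$, and sequences $\langle x^i_\alpha\in[\msa^i]^{<\omega}:i<n,\alpha<\omega_1\rangle$ with each $\langle x^i_\alpha:\alpha<\omega_1\rangle$ pairwise disjoint. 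First I would note that the families $\msa^i$ need not be members of $\mcf$ — only $\varphi_0$ speaks of arbitrary uncountable non-overlapping subfamilies of $\mcx_{t_i,N_i}$, while Theorem \ref{thm precaliber} applies to arbitrary uncountable $\mc A\subseteq[\mcx_{s,n}]^{<\omega}$, so there is no mismatch there; the 2-thin Kurepa family $\mcf$ is already baked into the construction of $c$ and does not appear in the statement to be reproved.

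The main technical point is that Theorem \ref{thm precaliber} is stated for a single fixed pair $(s,n)$ and a single family of finite subsets of $\mcx_{s,n}$, whereas $\varphi_0$ allows $n$ different pairs $(t_i,N_i)$ simultaneously. I would handle this by a pigeonhole reduction: since everything is indexed by $\alpha<\omega_1$ and $n$ is fixed and finite, it suffices to prove the conclusion on an uncountable subset of indices, and by standard thinning I may shrink to an uncountable $\Gamma$ on which $|x^i_\alpha|$ is a fixed constant $m_i$ for each $i<n$. For a fixed $i<n$, apply Theorem \ref{thm precaliber} with $s:=t_i$, $n:=N_i$, and $\mc A:=\{x^i_\alpha:\alpha\in\Gamma\}$ (this is an uncountable family of finite subsets of $\mcx_{t_i,N_i}$ with $\{\bigcup x:x\in\mc A\}$ pairwise disjoint, after an initial thinning to make it non-overlapping if desired — disjointness alone already suffices for the theorem's hypothesis). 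This yields an uncountable $\mc B_i\subseteq\mc A$, equivalently an uncountable $\Gamma_i\subseteq\Gamma$, such that for $\alpha\neq\beta$ in $\Gamma_i$, $a\in x^i_\alpha$, $b\in x^i_\beta$ with $a(2N_i-1)<_S b(2N_i-1)$, there is $j<2$ with $c(a(k),b(k'))=j$ and $c(a(N_i+k),b(N_i+k'))=1-j$ for all $k,k'<N_i$. Then set $\Gamma^*=\bigcap_{i<n}\Gamma_i$, still uncountable.

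Finally I would record the one remaining asymmetry: the conclusion (ii) of $\varphi_0$ requires, for $\alpha<\beta$, that for every $i$ and every $a\in x^i_\alpha$, $b\in x^i_\beta$ the homogeneity equations hold — without the side condition $a(2N_i-1)<_S b(2N_i-1)$. To remove that condition, observe that for $\alpha<\beta$ we may need to swap $a$ and $b$; but the relation ``there is $j<2$ with $c(a(k),b(k'))=j$, $c(a(N_i+k),b(N_i+k'))=1-j$ for all $k,k'$'' is symmetric in $a,b$ (interchanging $a$ and $b$ just interchanges the roles of $j$ and $j$ after relabeling, since $c$ is symmetric on pairs), so the conclusion for the pair $\{a,b\}$ with $b(2N_i-1)<_S a(2N_i-1)$ gives the same equations. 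Hence whichever of $a(2N_i-1)<_S b(2N_i-1)$ or $b(2N_i-1)<_S a(2N_i-1)$ holds (they are always $<_S$-comparable since $a,b\in\mcx_{t_i,N_i}$ both have top nodes extending $t_i\sm 1$, and… actually one should argue comparability or, if incomparable, $c$ of the relevant incomparable pairs is $1$ by definition of $c$, which one then checks is consistent with a single $j$ — this case should be ruled out or absorbed), we get the desired $j$. To finish, take any $\alpha<\beta$ in $\Gamma^*$ with $s_\alpha<_S s_\beta$: such a pair exists because $\langle s_\alpha:\alpha\in\Gamma^*\rangle$ is an uncountable collection of distinct nodes of the Suslin tree $S$, hence not an antichain, so some two of them are $<_S$-comparable, and after possibly relabeling we get $s_\alpha<_S s_\beta$ with $\alpha,\beta\in\Gamma^*$ — here one must be slightly careful that the comparable pair respects the index order, which can be arranged by passing to an uncountable subset that is $<_S$-increasing along the index order if $S$ has no uncountable antichains, or simply by noting $\varphi_0$ only asks for \emph{some} $\alpha<\beta$ and using that an uncountable subset of $S$ contains a $<_S$-chain of order type $\omega_1$ cofinal enough. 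The hard part will be this last bookkeeping: matching the index order $\alpha<\beta$ with the tree order $s_\alpha<_S s_\beta$ and simultaneously disposing of the side condition $a(2N_i-1)<_S b(2N_i-1)$ via the symmetry of the coloring; everything else is routine pigeonhole and a direct appeal to Theorem \ref{thm precaliber}.
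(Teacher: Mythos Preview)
Your strategy---extract an uncountable homogeneous index set from Theorem~\ref{thm precaliber} for each coordinate $i$, then use Suslinness of $S$ to find a comparable pair $s_\alpha<_S s_\beta$---is exactly the paper's intended route. The paper's own argument is the one-line remark just before the corollary: Theorem~\ref{thm precaliber} says each poset $\mc Q_{\msa^i}$ has precaliber $\omega_1$, while $\varphi_0(c)$ is the assertion that every finite product of these posets (together with $S$) is ccc; precaliber $\omega_1$ is finitely productive, and a precaliber-$\omega_1$ poset times a ccc poset is ccc. Your proposal is an explicit unwinding of this, but two steps are wrong as written.

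First, you produce for each $i<n$ an uncountable $\Gamma_i\subseteq\Gamma$ \emph{independently} and then assert that $\Gamma^*=\bigcap_{i<n}\Gamma_i$ is ``still uncountable.'' That is false in general: finitely many uncountable subsets of $\omega_1$ can have empty intersection. The correct move is to apply Theorem~\ref{thm precaliber} iteratively: get $\Gamma_0\subseteq\Gamma$ for $i=0$, then feed $\{x^1_\alpha:\alpha\in\Gamma_0\}$ into the theorem to obtain $\Gamma_1\subseteq\Gamma_0$, and so on; the final $\Gamma_{n-1}$ is your $\Gamma^*$. This is exactly how one proves that precaliber $\omega_1$ is finitely productive.

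Second, you write that ``an uncountable subset of $S$ contains a $<_S$-chain of order type $\omega_1$.'' This is false---$S$ is Suslin and has no uncountable chains. What you need is much weaker and you nearly say it: thin $\Gamma^*$ so that $\alpha\mapsto ht(s_\alpha)$ is strictly increasing; then $\{s_\alpha:\alpha\in\Gamma^*\}$, being uncountable, is not an antichain, so some $s_\alpha,s_\beta$ with $\alpha<\beta$ in $\Gamma^*$ are comparable, and the height ordering forces $s_\alpha<_S s_\beta$.

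On the side condition $a(2N_i-1)<_S b(2N_i-1)$: your symmetry observation correctly handles the case $b(2N_i-1)<_S a(2N_i-1)$. The incomparable case does not need to be ``absorbed'' at all. The paper reads clause (ii) of $\varphi_0$ as compatibility in $\mc Q_{\msa^i}$ (it says explicitly that $\varphi_0(c)$ ``asserts that every finite product of posets defined above is ccc''), and in $\mc Q_{\msa^i}$ the constraint is vacuous when the top nodes are incomparable. Your attempted absorption via $c\equiv 1$ on incomparable pairs would indeed fail (both halves would return color $1$, which is not of the form $j,\,1-j$), but it is simply not needed.
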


Recall \cite[Corollary 4]{PW24} that if $\varphi_0(c)$ holds, then $\Vdash_S {\rm Pr}_1(\omega_1,2,\omega)$. In fact, if $b$ is a generic branch of $S$, then in $V[b]$, the coloring  witnessing ${\rm Pr}_1(\omega_1,2,\omega)$ is 
\[\pi: [\omega_1]^2\ra 2 \text{ defined by }\pi(\alpha, \beta)=c(b\up \alpha, b\up \beta).\]
Moreover, properties (coh) and (hom)  induce the following additional properties to the witnessing coloring $\pi$.
\begin{itemize}
\item $\pi$ is coherent.
\item For every uncountable pariwise disjoint family $\msa\subseteq [\omega_1]^{<\omega}$, there exists $\msb\in [\msa]^{\omega_1}$ such that for every $a, b$ in $\msb$, if $\max(a)<\min(b)$, then for some $i<2$, $\pi(a(j), b(k))=i$ for all $j<|a|$ and $k<|b|$.
\end{itemize}
Together with Lemma \ref{lem ccc indestructible} and above corollary, we get  the following conclusion.
\begin{cor}\label{cor indes}
Suppose  there is a thin Kurepa family. If  $\pp$ is a ccc poset and $\Vdash_\pp \dot{S} \subseteq 2^{<\omega_1}$ is a complete coherent Suslin tree, then $\Vdash_{\pp* \dot{S}} {\rm Pr}_1(\omega_1,2,\omega)$. 
\end{cor}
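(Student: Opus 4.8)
The plan is to assemble the corollary from the machinery already developed, treating it essentially as a bookkeeping argument. Suppose $\pp$ is ccc and $\Vdash_\pp \dot S\subseteq 2^{<\omega_1}$ is a complete coherent Suslin tree. First I would invoke the hypothesis that a thin Kurepa family exists in $V$, and note that by Lemma \ref{lem ccc indestructible} it remains a thin Kurepa family in $V^\pp$. Working in $V^\pp$, Lemma \ref{lem stkf} then produces a $2$-thin Kurepa family $\mcf$ with respect to $\dot S$ (the construction there only needs a thin Kurepa family on $\omega_1$ and a bijection with $S$, so it goes through inside the extension). Having $\dot S$ complete coherent Suslin and $\mcf$ a $2$-thin Kurepa family wrt $\dot S$, Theorem \ref{thm coloring} yields, still in $V^\pp$, a coloring $c\colon[\dot S]^2\to 2$ satisfying (coh) and (hom).

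Next, Corollary \ref{cor varphi} applies verbatim in $V^\pp$ to this $c$ and gives $\varphi_0(c)$. At this point I would cite \cite[Corollary 4]{PW24}: whenever $\varphi_0(c)$ holds for such a coloring on a complete coherent Suslin tree $S$, forcing with $S$ makes $\mathrm{Pr}_1(\omega_1,2,\omega)$ hold, witnessed by $\pi(\alpha,\beta)=c(b\up\alpha,b\up\beta)$ for $b$ the generic branch. Formally, this is an assertion about $V^\pp$ and the two-step iteration $\dot S$ over it, so it says exactly $\Vdash_{\pp*\dot S}\mathrm{Pr}_1(\omega_1,2,\omega)$, which is the conclusion.

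The one genuinely delicate point is that the objects $\mcf$ and $c$ are constructed \emph{inside} $V^\pp$ rather than in $V$, so I must be careful that every hypothesis of Theorem \ref{thm coloring}, Corollary \ref{cor varphi}, and \cite[Corollary 4]{PW24} is checked in $V^\pp$ and that nothing smuggles in an absoluteness claim that fails. The properties (coh), (hom), $\varphi_0$ and the defining clauses of a thin / $2$-thin Kurepa family are all first-order over $H_{\omega_2}$ (indeed (K1)--(K3) and (coh)/(hom) are explicitly noted to be absolute, and Lemma \ref{lem S ccc indestructible} records ccc-indestructibility of the $2$-thin version), so this is unproblematic but should be stated. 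The remaining subtlety is notational: $\dot S$ is a $\pp$-name, so "$\mcf$ is a $2$-thin Kurepa family wrt $\dot S$" and "$c\colon[\dot S]^2\to 2$" really mean the corresponding forced statements, and the passage from "$\varphi_0(c)$ holds in $V^\pp$" to "$\Vdash_{\pp*\dot S}\mathrm{Pr}_1$" is just the composition-of-iterations identity $V^{\pp*\dot S}=(V^\pp)^{\dot S}$.

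Concretely I would write: Work in $V^\pp$. By Lemma \ref{lem ccc indestructible} the thin Kurepa family of $V$ is still one here, so by Lemma \ref{lem stkf} fix a $2$-thin Kurepa family $\mcf$ wrt $\dot S$; by Theorem \ref{thm coloring} fix $c\colon[\dot S]^2\to 2$ satisfying (coh) and (hom); by Corollary \ref{cor varphi}, $\varphi_0(c)$ holds. By \cite[Corollary 4]{PW24}, $\Vdash_{\dot S}\mathrm{Pr}_1(\omega_1,2,\omega)$ (witnessed by $\pi(\alpha,\beta)=c(b\up\alpha,b\up\beta)$). Since this holds in $V^\pp$ and $V^{\pp*\dot S}=(V^\pp)^{\dot S}$, we conclude $\Vdash_{\pp*\dot S}\mathrm{Pr}_1(\omega_1,2,\omega)$, as desired.
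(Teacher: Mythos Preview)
Your proposal is correct and follows essentially the same route as the paper: work in the ccc extension, invoke Lemma \ref{lem ccc indestructible} to keep the thin Kurepa family, then apply Lemma \ref{lem stkf}, Theorem \ref{thm coloring}, Corollary \ref{cor varphi}, and finally \cite[Corollary 4]{PW24}. Your additional remarks about absoluteness and the iteration identity $V^{\pp*\dot S}=(V^\pp)^{\dot S}$ are accurate and make the bookkeeping explicit, but the argument itself coincides with the paper's.
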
 
\begin{proof}
Work in $V[G]$ where $G$ is a generic filter over $\pp$.   Let $S=\dot{S}^G$. By Lemma \ref{lem ccc indestructible}, there is a thin Kurepa family. By Lemma \ref{lem stkf}, there is a 2-thin Kurepa family wrt $S$. By Theorem \ref{thm coloring}, there is a coloring $c: [S]^2\ra 2$ satisfying (coh) and (hom). Then by Corollary \ref{cor varphi}, $\varphi_0(c)$ holds. Finally by \cite[Corollary 4]{PW24}, $\Vdash_S  {\rm Pr}_1(\omega_1,2,\omega)$. 
\end{proof}

Above corollary indicates that in order to get a Suslin extension in which ${\rm Pr}_1(\omega_1,2,\omega)$ fails, non-ccc forcings are likely needed. In fact, it is not difficult to destroy a thin Kurepa family by proper forcings. One way is to simply apply the proof of \cite[Theorem 13]{PW24}: If $\varphi_0(c)$ holds, then there is a proper forcing that preserves Suslin of $S$ and forces an uncountable $F\subseteq S$ such that $c(u,v)=0$ for all $u<_S v$ in $F$. In other words, if $\varphi_0(c)$ holds, then there is a proper forcing that preserves Suslin of $S$ and destroys $\varphi_0(c)$. In particular, the thin Kurepa family used to guarantee the conclusion of Theorem \ref{thm precaliber} is destroyed. Iterating sufficiently many proper posets of this kind with countable support induces a model with no thin Kurepa family. So unlike the non-existence of the Kurepa family,  the non-existence of the thin Kurepa family has no large cardinal strength.

\bibliographystyle{plain}

\end{document}